\newtheorem{theorem}{Theorem}[section]
\newtheorem{lemma}[theorem]{Lemma}
\newtheorem{corollary}[theorem]{Corollary}
\newtheorem{proposition}[theorem]{Proposition}
\numberwithin{equation}{section}
\theoremstyle {definition}
\newtheorem{definition}{Definition}[section]
\newtheorem{remark}{Remark}[section]
\DeclareMathOperator{\Div}{div}
\DeclareMathOperator{\R}{\mathbb R}
\DeclareMathOperator{\N}{\mathbb N}
\newcommand{\diam}{\mathrm{diam}}
\DeclareMathOperator{\E}{\mathcal{E}}
\newcommand{\sym}{\ensuremath{\mathrm{sym}}}
\newcommand{\curl}{\mathrm{curl}}
\begin{document}
\title[2D Lagrangian mean curvature equation]{Ill-posedness of the Dirichlet problem for 2D Lagrangian mean curvature equations}

\begin{abstract}
We investigate the Dirichlet problem of the two-dimensional Lagrangian mean curvature equation in bounded domains. Infinitely many $C^{1, \alpha} (\alpha\in (0,\frac{1}{5}))$ very weak solutions are built through Nash-Kuiper construction. Moreover, we note there are infinitely many $C^{1, \alpha}$ very weak solutions that can not be improved to be $C^{2, \alpha}$.

\end{abstract}

\author{Wentao Cao}
\address{Academy for Multidisciplinary Studies, Capital Normal University, Beijing, 100048, China}
    \email{cwtmath@cnu.edu.cn}
\author{Zhehui Wang}
\address{School of Sciences, Great Bay University, Dongguan, 523000, China}
\email{wangzhehui@gbu.edu.cn}
\subjclass[2020]{35A02, 35D99, 35J25, 35J60, 35R25}

\maketitle
\section{Introduction}
In the present paper, we study the Dirichlet problem for the two-dimensional Lagrangian mean curvature equation
\begin{equation}\label{eq-dirich-lag}
    \begin{cases}
        \cos\Theta\Delta v+\sin\Theta\left(\det\nabla^2 v-1\right)=0 \text{ in }\Omega,\\
        v=g \text{ on }\partial\Omega,
    \end{cases}
\end{equation}
where $\Omega\subset\R^2$ is a bounded domain, and the phase function $\Theta: \overline{\Omega}\to (-\pi, \pi)$ is a given $C^{2,\kappa}(\kappa\in(0,1))$ function. The equation
\begin{equation}\label{lag-eq2}
\cos\Theta\Delta v+\sin\Theta\left(\det\nabla^2 v-1\right)=0
\end{equation}
has an equivalent form
\begin{equation}\label{lag-eq1-1}
\arctan \lambda_1+\arctan\lambda_2=\Theta
\end{equation}
with $\lambda_1, \lambda_2$  being eigenvalues of $\nabla^2 v$. The phase function $\Theta$
is a potential for the mean curvature of the Lagrangian submanifold
  $$\Sigma:=\{(x,\nabla v(x)); x\in\Omega\}$$
or called the gradient graph.

Generally, the $n$-dimensional Lagrangian mean curvature equation is 
\begin{equation}\label{lag-eq1}
\sum_{k=1}^{n}\arctan \lambda_k=\Theta,
\end{equation}
where $\lambda_i$'s are eigenvalues of $\nabla^2 v$. The mean curvature vector of the $n$-dimensional gradient graph $\Sigma$ is equal to $J\nabla_g\Theta$, where $g=\mathrm{Id}+(\nabla^2 v)^2$ is the induced metric on $\Sigma$, and $J(x, y)=(-y, x)$ is a rotation in $\R^n\times\R^n$. 
In particular, when $\Theta$ is a constant, \eqref{lag-eq1} is called the special Lagrangian equation, which originates in the special Lagrangian geometry introduced by Harvey-Lawson \cite{HL}. Note $\Sigma$ is a minimal surface in $\R^n\times\R^n$ if and only if $\Theta$ is a constant. 

\subsection{Background}
For the $n$-dimensional equation \eqref{lag-eq1}, Yuan \cite{Yuan06} noticed that the level set $\{\lambda:=(\lambda_1,\cdots, \lambda_n)\in\R^n| \lambda \text{ satisfies equation } \eqref{lag-eq1}\}$ is convex only when $|\Theta|\geq\frac{(n-2)\pi}{2}$. Thus the phase $\Theta$ is called critical if $|\Theta|=\frac{(n-2)\pi}{2}$, and called supercritical (or subcritical) if $|\Theta|>\frac{(n-2)\pi}{2}$ (or $|\Theta|<\frac{(n-2)\pi}{2}$). When $n=2$, viewing \eqref{lag-eq2} as Monge-Amp\`{e}re type equations, Heinz established some regularity results in \cite{Heinz}. For higher dimensions, a priori estimates for solutions of the special Lagrangian equation with critical and supercritical phases were obtained by Warren-Yuan \cite{WY09, WY10}, Wang-Yuan \cite{WaY}, Chen-Warren-Yuan \cite{CWY}, respectively.  Moreover, the Hessian estimate for Lagrangian mean curvature equation \eqref{lag-eq1} with $C^{1,1}$ critical and supercritical phases was obtained by Bhattacharya \cite{Bha21, Bha22}, while the gradient estimate for $C^2$ critical and supercritical phases was achieved by Bhattacharya-Shanker-Mooney \cite{BSM}. Hence, combining a priori estimates in \cite{Bha21, Bha22, BSM}, they solved the Dirichlet problem for Lagrangian mean curvature equation \eqref{lag-eq1} in uniformly convex bounded domains with the phase function satisfying $\frac{(n-2)\pi}{2}\leq \Theta<\frac{n\pi}{2}$. Classical solvability of the Dirichlet problem for the Lagrangian mean curvature equation \eqref{lag-eq1} was also established by Lu \cite{Lu2023}, and Collins, Picard and Wu \cite{CPW2017}. Besides, Harvey and Lawson \cite{HL2, HL3}, and Cirant and Payne \cite{CP2021} proved the existence of the viscosity solution to the Lagrangian mean curvature equation \eqref{lag-eq1}. However, interior estimates fail for the subcritical case due to the examples constructed in Nadirashvili-Vl\u{a}du\c{t} \cite{NV}, Wang-Yuan \cite{WaY2}, and Mooney-Savin \cite{MS}.

In this paper, we consider not only classical solutions  but also very weak solutions to \eqref{eq-dirich-lag}. In fact, the very weak solution to the Monge-Amp\`ere equation was first introduced by Lewicka-Pakzad \cite{LePa} via very weak Hessian defined by Iwaniec \cite{Iwan}. The two-dimensional very weak Hessian coincides with the Monge-Ampère measure for any convex solution (\cite{DGG}). Moreover, Without boundary conditions, Lewicka-Pakzad \cite{LePa}, Sz\'{e}kelyhidi  and the first author \cite{CSze}, and Hirsch-Inauen and the first author \cite{CHI} proved that there are infinitely many very weak solutions to the two dimensional Monge-Amp\`{e}re equation of different regularity. After imposing boundary data, the first author in \cite{Cao} studied very weak solutions to the Monge-Amp\`ere equation. Recently, Li-Qiu \cite{LQ} also constructed infinitely many very weak solutions of the Dirichlet problem for 2-Hessian equations.
All the above approaches to constructing very weak solutions rely on the
Nash-Kuiper construction, which was originated from isometric embeddings (the Nash-Kuiper theorem; see \cite{Nash54, Kui55, CDS12, CS19-a, DIS, Sze}), and developed to be convex integration to prove the $h$-principle (see \cite{Gro}). In recent decades, the technique of convex integration becomes a powerful approach to showing non-uniqueness of nonlinear partial differential equations.

\subsection{Main result and strategies}
Along the line of very weak solutions to the Monge-Amp\`ere equation, we also introduce the concept of very weak solutions to the Dirichlet problem \eqref{eq-dirich-lag}.

\begin{definition}[Very weak solution]\label{defvweak}
$v\in C^1(\Omega)$ is a very weak solution to the Dirichlet problem \eqref{eq-dirich-lag} in $\Omega$, if $v=g$ on $\partial\Omega$, and it satisfies the following equation in the very weak sense in $\Omega$:
\begin{equation}\label{lag-eq3}
  \cos\Theta \mbox{ curl curl } (v\cdot \mbox{Id})+\sin\Theta\left(-\frac{1}{2}\mbox{ curl curl } (\nabla v\otimes\nabla v)-1\right)=0;
  \end{equation}
namely, for any $\phi\in C^\infty_0(\Omega)$, 
\begin{align}
 &\int_\Omega\partial_1v\partial_2v\partial_{12}^2(\phi\sin\Theta)-\frac{1}{2}(\partial_1v)^2\partial_{22}^2(\phi\sin\Theta)-\frac{1}{2}(\partial_2v)^2\partial_{11}^2(\phi\sin\Theta) dx \nonumber\\
 &\quad +\int_\Omega v\Delta (\phi \cos\Theta)dx-\int_\Omega \phi\sin\Theta dx=0.
\end{align}
\end{definition}
The operator $\mathrm{curl}$ in two-dimension is defined by 
    \begin{align*}
    \mbox{ curl } \xi&=\partial_2\xi_1-\partial_1\xi_2 \mbox{ for } \xi=(\xi_1, \xi_2)\in\mathbb{R}^2,\\
    \mbox{ curl }A&=(\partial_2A_{11}-\partial_1A_{12}, \partial_2A_{21}-\partial_1A_{22}) \mbox{ for } A=(A_{ij})_{2\times 2}\in \R^{2\times 2}.
    \end{align*}
In particular, $\mbox{ curl curl } (v\cdot \mbox{Id})$ is understood as $\Delta v$ in the very weak sense. 

Now we are ready to state our main result.
\begin{theorem}\label{infity-sol}
Let $\Omega\subset\R^2$ be a bounded simply connected domain  with a $C^{2,\kappa}$ boundary $\partial\Omega$ for any given $\kappa\in(0, 1)$. Suppose $g\in C^{2,\kappa}(\partial\Omega)$ and the phase $\Theta:\overline{\Omega}\to (-\pi, \pi)$ is a $C^{2,\kappa}$ function. Then the following statements hold.
\begin{itemize}
    \item[(1)] There exists some small positive number $c_1$ such that if $|\sin\Theta|\leq c_1$, then the Dirichlet problem \eqref{eq-dirich-lag} admits a unique $C^{2, \kappa}(\Omega)$ classical solution.
    \item[(2)] Given any constant $c_2\in(0, 1]$, the Dirichlet problem \eqref{eq-dirich-lag} admits infinitely many $C^{1, \alpha}(\Omega)$ very weak solutions for any $\alpha\in(0,\frac15)$ whenever $|\sin\Theta|\geq c_2$.
\end{itemize}
\end{theorem}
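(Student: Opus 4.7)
For $|\sin\Theta|\le c_1$ small, $\cos\Theta$ is uniformly bounded away from zero, so \eqref{lag-eq2} can be rewritten as $\Delta v = -\tan\Theta\,(\det\nabla^2 v - 1)$. I would introduce the solution map $T$ on a closed ball of $C^{2,\kappa}(\overline\Omega)$ sending $u\mapsto v$, where $v$ solves the linear Dirichlet problem $\Delta v = -\tan\Theta\,(\det\nabla^2 u - 1)$ with $v = g$ on $\partial\Omega$. Schauder estimates give $\|Tu\|_{C^{2,\kappa}} \le C\bigl(\|g\|_{C^{2,\kappa}} + \|\tan\Theta\|_{C^{0,\kappa}}(1+\|u\|_{C^{2,\kappa}}^2)\bigr)$, and bilinearity of $\det\nabla^2$ yields a contraction estimate $\|Tu_1 - Tu_2\|_{C^{2,\kappa}} \le C\|\tan\Theta\|_{C^{0,\kappa}}(\|u_1\|_{C^{2,\kappa}} + \|u_2\|_{C^{2,\kappa}})\|u_1 - u_2\|_{C^{2,\kappa}}$. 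Taking $c_1$ small enough (depending on $\|g\|_{C^{2,\kappa}}$, $\|\Theta\|_{C^{2,\kappa}}$, and $\Omega$) makes $T$ a contraction on a suitable ball, and Banach's fixed-point theorem delivers the unique $C^{2,\kappa}$ classical solution.

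\textbf{Part (2): reformulation.} Since $\Theta$ is continuous and $|\sin\Theta|\ge c_2>0$, we may assume $\sin\Theta > 0$ throughout $\overline\Omega$. Using $\mbox{curl curl}(v\,\mbox{Id}) = \Delta v$ and $\mbox{curl curl}(\nabla v\otimes\nabla v) = -2\det\nabla^2 v$, the very weak formulation in Definition \ref{defvweak} is equivalent to the distributional identity
\[
\mbox{curl curl}\bigl(\cos\Theta\,v\,\mbox{Id} - \tfrac12\sin\Theta\,\nabla v\otimes\nabla v\bigr) = \sin\Theta.
\]
Solving the classical Dirichlet problem $\Delta a = \sin\Theta$ in $\Omega$ and setting $A:=a\,\mbox{Id}\in C^{2,\kappa}$ produces a symmetric matrix field with $\mbox{curl curl}\,A = \sin\Theta$; since $\Omega$ is simply connected, the PDE is then equivalent to existence of $w\in C^1(\Omega;\R^2)$ solving the coupled system
\[
\tfrac12\sin\Theta\,\nabla v\otimes\nabla v + \sym\nabla w + A = \cos\Theta\,v\,\mbox{Id}\quad\text{in }\Omega, \qquad v|_{\partial\Omega} = g.
\]

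\textbf{Nash-Kuiper construction.} Following the strategy of \cite{CSze,CHI,Cao}, I would build inductively a sequence $(v_q, w_q)\in C^2(\overline\Omega)\times C^2(\overline\Omega;\R^2)$ with $v_q = g$ on $\partial\Omega$ and a strictly positive-definite defect
\[
D_q := \cos\Theta\,v_q\,\mbox{Id} - A - \sym\nabla w_q - \tfrac12\sin\Theta\,\nabla v_q\otimes\nabla v_q,
\]
satisfying $\delta_q\,\mbox{Id} \preceq D_q \preceq C\delta_q\,\mbox{Id}$ with $\delta_q\to 0$ geometrically. The base pair $(v_0, w_0)$ is any $C^{2,\kappa}$ extension of $g$ together with a choice of $a$ whose harmonic part is sufficiently negative so that $D_0\succ 0$ pointwise. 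At stage $q$ one smoothly decomposes $D_q = \sum_k \tfrac14\sin\Theta\,\Gamma_k^2(x)\,\xi_k\otimes\xi_k$ along a finite family of directions $\xi_k\in\mathbb{S}^1$ (possible since $\sin\Theta\ge c_2$ and $D_q\succ 0$) and adds Nash corrugations
\[
\delta v_q = \sum_k\frac{\Gamma_k}{\lambda_q}\sin(\lambda_q\xi_k\cdot x) + (\text{lower order}),
\]
multiplied by cutoffs supported strictly inside $\Omega$ to preserve boundary data, while $\delta w_q$ is chosen to cancel the first-order cross-terms $\sin\Theta\bigl(\nabla v_q\otimes\nabla\delta v_q+\nabla\delta v_q\otimes\nabla v_q\bigr)$ and the high-frequency oscillations in $\nabla\delta v_q\otimes\nabla\delta v_q$. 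Amplitudes $\Gamma_k = O(\delta_q^{1/2})$ and a carefully tuned super-exponential frequency growth $\lambda_q$ yield $\|\delta v_q\|_{C^1}\lesssim\delta_q^{1/2}$ and $\|\delta v_q\|_{C^2}\lesssim\delta_q^{1/2}\lambda_q$; the refined analysis of \cite{CHI} then makes the $C^{1,\alpha}$ norms summable precisely in the range $\alpha<\tfrac15$. The limit $v\in C^{1,\alpha}(\Omega)$ satisfies $v|_{\partial\Omega}=g$ and the system with zero defect, which upon applying $\mbox{curl curl}$ recovers \eqref{lag-eq3} in the very weak sense. Infinitely many distinct solutions are produced by varying the initial choice of $a$ modulo harmonics, the decomposition directions at each stage, and the oscillation phases.

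\textbf{Main obstacle.} The key novelty compared with the Monge-Amp\`ere system in \cite{CSze,CHI,Cao} is the undifferentiated, $v$-dependent term $\cos\Theta\,v\,\mbox{Id}$ on the right of the coupled system: the update $v_q\to v_q+\delta v_q$ shifts the right-hand side by $\cos\Theta\,\delta v_q\,\mbox{Id}$, which must be absorbed into the defect accounting without damaging the geometric decay of $\delta_q$ or the Hölder exponent. Since $\|\delta v_q\|_{C^0}\lesssim\delta_q^{1/2}/\lambda_q\ll\delta_{q+1}$, this extra term is summable and harmless. The remaining technical burden — controlling the commutator errors produced by the variable coefficient $\sin\Theta$ against high-frequency oscillations — is handled by mollifying the iterate at an intermediate scale $\ell_q$ chosen between $\lambda_q^{-1}$ and $\lambda_{q-1}^{-1}$, exactly as in \cite{CSze,CHI}.
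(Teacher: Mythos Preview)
\textbf{Part (1)} is fine: your contraction-mapping argument and the paper's implicit function theorem argument are two packagings of the same perturbation-of-Laplacian idea, and both yield existence and uniqueness once $\|\tan\Theta\|_{C^\kappa}$ is small.

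\textbf{Part (2)} has a genuine gap in the reformulation. The very weak form in Definition~\ref{defvweak} places $\cos\Theta$ and $\sin\Theta$ \emph{outside} the $\curl\,\curl$: it is $\cos\Theta\,\curl\,\curl(v\,\mathrm{Id})+\sin\Theta\bigl(-\tfrac12\curl\,\curl(\nabla v\otimes\nabla v)-1\bigr)=0$, with the test-function pairing $\int v\,\Delta(\phi\cos\Theta)$ and $\int(\partial_iv)^2\partial_{jj}(\phi\sin\Theta)$. Your claimed equivalent identity $\curl\,\curl\bigl(\cos\Theta\,v\,\mathrm{Id}-\tfrac12\sin\Theta\,\nabla v\otimes\nabla v\bigr)=\sin\Theta$ pulls the trigonometric factors \emph{inside} the differential operator; the two differ by commutators such as $2\nabla v\cdot\nabla\cos\Theta+v\,\Delta\cos\Theta$ (and an analogous second-order commutator for the $\sin\Theta$ factor), which are nonzero whenever $\Theta$ is nonconstant. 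Consequently, a $C^{1,\alpha}$ pair $(v,w)$ solving your coupled system $\tfrac12\sin\Theta\,\nabla v\otimes\nabla v+\sym\nabla w+A=\cos\Theta\,v\,\mathrm{Id}$ does \emph{not} solve \eqref{lag-eq3}, so the whole iteration is aimed at the wrong target.

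This is precisely the obstacle the paper isolates. Its remedy is to first divide by $\sin\Theta$ and then rewrite $\cot\Theta\,\Delta v=\Delta(v\cot\Theta)-F(v)=\Delta(v\cot\Theta-V)$, where $V$ solves the auxiliary Dirichlet problem $\Delta V=F(v):=2\nabla v\cdot\nabla\cot\Theta+v\,\Delta\cot\Theta$. The system to iterate becomes $A=\sym\nabla w+\tfrac12\nabla v\otimes\nabla v-(v\cot\Theta)\mathrm{Id}+V\,\mathrm{Id}$, and the commutator you dropped reappears as the \emph{nonlocal} error $\mathcal{E}_{14}=(V^\lozenge-V)\mathrm{Id}$ at each step. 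Controlling it requires an $L^\infty$ elliptic estimate for $\Delta(V^\lozenge-V)=\Div\zeta+f$ (Lemma~\ref{elliptic} in the paper), which has no counterpart in your sketch. Your ``main obstacle'' paragraph correctly flags the zero-order shift $\cos\Theta\,\delta v_q\,\mathrm{Id}$ (analogous to the paper's local error $\mathcal{E}_{15}$), but misses that the genuine new difficulty is this nonlocal commutator term; the mollification trick you invoke from \cite{CSze,CHI} does not address it.
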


\begin{remark}
    The assumption of simply connectedness is only needed for applying Lemma \ref{diag} to the proof of Proposition \ref{p:stage}.
\end{remark}
Noting $c_2$ is arbitrary, as a quick observation, we have
\begin{corollary}
For any $c_3\in(0, c_1)$, if $c_3\leq|\sin\Theta|\leq c_1$, then there are infinitely many $C^{1, \alpha}$ very weak solutions to the Dirichlet problem \eqref{eq-dirich-lag} which can not be improved to be $C^{2,\alpha}$.
\end{corollary}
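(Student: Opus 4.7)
The plan is to observe that under the hypothesis $c_3\le|\sin\Theta|\le c_1$, both parts of Theorem~\ref{infity-sol} apply simultaneously: Part~(1) with the constant $c_1$ yields a unique $C^{2,\kappa}(\Omega)$ classical solution, call it $v^\ast$, while Part~(2) with $c_2=c_3$ yields an infinite family $\{v_i\}_{i\in\mathbb{N}}$ of distinct $C^{1,\alpha}(\Omega)$ very weak solutions (with the same boundary data $g$). The desired conclusion will follow if we can show that at most one $v_i$ in this family can have $C^{2,\alpha}$ regularity.

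The key step is to verify that any $C^{2,\alpha}$ very weak solution is automatically a classical solution, i.e.\ solves \eqref{eq-dirich-lag} pointwise. For this I would integrate by parts in the definition \eqref{lag-eq3}: for a $C^2$ function $v$ one has $\text{curl\,curl}(v\cdot\mathrm{Id})=\Delta v$ and $\text{curl\,curl}(\nabla v\otimes\nabla v)=-2\det\nabla^2 v$ in the classical sense. Substituting these into Definition~\ref{defvweak} and removing the test function $\phi\in C_0^\infty(\Omega)$ by the fundamental lemma of calculus of variations produces \eqref{lag-eq2} pointwise in $\Omega$, with the boundary datum $v=g$ still satisfied by hypothesis. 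Hence any $C^{2,\alpha}$ element of the family $\{v_i\}$ is a $C^{2,\alpha}$ classical solution of \eqref{eq-dirich-lag}.

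Next I would invoke the uniqueness statement in Part~(1). The equation \eqref{lag-eq2} with $|\sin\Theta|\le c_1$ small is a small perturbation of the Laplace equation and therefore satisfies a comparison principle in the class of $C^2$ solutions — in particular uniqueness holds not only in $C^{2,\kappa}$ but among all $C^2$ classical solutions sharing the boundary value $g$. Consequently, any $v_i$ that were $C^{2,\alpha}$ would necessarily coincide with the unique classical solution $v^\ast$. Since the family $\{v_i\}$ is infinite and distinct, all but (at most) one of its members must fail to be $C^{2,\alpha}$, yielding infinitely many very weak $C^{1,\alpha}$ solutions that cannot be upgraded to $C^{2,\alpha}$.

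The main subtle point is the uniqueness claim in the last paragraph: one must make sure that Part~(1) really produces uniqueness among all $C^2$ (not merely $C^{2,\kappa}$) solutions, since the exponent $\alpha\in(0,\tfrac15)$ of Part~(2) may be smaller than $\kappa$. This follows from the smallness of $|\sin\Theta|$, which makes \eqref{lag-eq2} uniformly elliptic with diagonally dominant linearization, so the standard maximum principle argument for quasilinear elliptic equations of Monge–Amp\`ere type applies. Everything else is a direct bookkeeping consequence of Theorem~\ref{infity-sol}.
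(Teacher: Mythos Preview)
Your argument is exactly the one the paper has in mind: the corollary is presented there as ``a quick observation'' with no proof, and the intended reasoning is precisely the juxtaposition of the uniqueness in Part~(1) with the infinitude in Part~(2) of Theorem~\ref{infity-sol}, together with the trivial remark that a $C^2$ very weak solution is classical.

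The one point that deserves more care is the one you flag yourself. Your maximum-principle patch is not quite complete: the linearization of \eqref{lag-eq2} at a pair of $C^{2,\alpha}$ solutions has coefficients of the form $\cos\Theta+\sin\Theta\,\partial_{ij}v$, and uniform ellipticity requires $|\sin\Theta|\cdot|\nabla^2 v|$ to be small compared to $\cos\Theta$; smallness of $|\sin\Theta|$ alone does not guarantee this without an a~priori bound on $\nabla^2 v$, which you do not have for an arbitrary $C^{2,\alpha}$ solution. A cleaner way around this is to observe that the exponent $\kappa\in(0,1)$ in Theorem~\ref{infity-sol} is arbitrary: since $\partial\Omega$, $g$, and $\Theta$ are $C^{2,\kappa}$, they are in particular $C^{2,\alpha}$ for any $\alpha<\kappa$, so one may simply rerun Part~(1) with $\alpha$ in place of $\kappa$ (the constant $c_1$ only improves when the H\"older exponent decreases, as the proof shows it depends on $\|\Theta\|_{1+\kappa}$). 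This yields uniqueness directly in $C^{2,\alpha}$ and closes the gap without any appeal to a comparison principle.
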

 Our strategy to show Theorem \ref{infity-sol} (1) is considering \eqref{lag-eq2} as a perturbation of a Laplacian equation and then applying the implicit function theorem. 
 As for Theorem \ref{infity-sol} (2), with $|\sin\Theta|\geq c_2>0$, the problem is reduced to solve
 $$\cot\Theta\Delta v+\det\nabla^2 v-1=0 \text{ in the very weak sense}.$$
 We will solve the above equation by the Nash-Kuiper construction. The main obstacle is how to treat terms with the phase function, compared with \cite{Cao}, which will be regarded as a Laplacian of some function, i.e.,
 $$\cot\Theta\Delta v=\Delta(v\cot\Theta-V), \text{ in the very weak sense,}$$
 with $$\Delta V=2\nabla v\cdot\nabla\cot\Theta+v\Delta\cot\Theta.$$
 Hence, the problem is turned to solve $$\curl\,\curl\,\big(\mbox{ sym }\nabla w+\frac{1}{2}\nabla v\otimes \nabla v-(v\cot\Theta)\cdot\mathrm{Id}+V\cdot\mathrm{Id}\big)=-1,$$ in the very weak sense, and then $V\cdot\textrm{Id}$ becomes an error term of a lower order. With the elliptic estimate in Lemma \ref{elliptic}, we are able to control it and arrive at our conclusion.

We remark that the method proving Theorem \ref{infity-sol} (2) has more general applications.
\begin{remark}\label{genrel}
Let
\begin{equation*}
F_\tau(\lambda)=\left\{
\begin{aligned}
    &\frac{1}{2}\big(\ln \lambda_1+\ln \lambda_2\big), \qquad \tau=0,\\
    &\frac{\sqrt{a^2+1}}{2b}\big(\ln\frac{\lambda_1+a-b}{\lambda_1+a+b}+\ln\frac{\lambda_2+a-b}{\lambda_2+a+b}\big), \qquad 0<\tau<\frac{\pi}{4},\\
    &-\sqrt{2}\big(\frac{1}{1+\lambda_1}+\frac{1}{1+\lambda_2}\big), \qquad \tau=\frac{\pi}{4},\\
    &\frac{\sqrt{a^2+1}}{b}\big(\arctan\frac{\lambda_1+a-b}{\lambda_1+a+b}+\arctan\frac{\lambda_2+a-b}{\lambda_2+a+b}\big), \qquad \frac{\pi}{4}<\tau<\frac{\pi}{2},\\
    &\arctan\lambda_1+\arctan\lambda_2, \qquad \tau=\frac{\pi}{2},
\end{aligned}\right.
\end{equation*}
where $a=\cot\tau$, and $b=\sqrt{|\cot^2\tau-1|}$. With slight modifications, we can establish the result like Theorem \ref{infity-sol} (2) for the following Dirichlet problem
\begin{align}\label{general}
\begin{cases}
F_\tau(\lambda)=\Theta, &\mbox{ in } \Omega,\\
\qquad v=g, &\mbox{ on } \partial\Omega,
\end{cases}
\end{align}
where $\lambda_1$, $\lambda_2$ are eigenvalues of $\nabla^2 v$. We note the equation $F_\tau(\lambda)=\Theta$ can be formulated as the equation of the following form
$$P\Delta u+Q\det\nabla^2 u=R.$$

Equations in \eqref{general} are originated from the study of the gradient graph $\Sigma$. By Warren \cite{Wa}, if $\Sigma$ is a minimal Lagrangian graph in $(\R^2\times\R^2, g_\tau)$ with
$$g_\tau=\sin\tau \big(\sum_{i=1}^2d x_i\otimes d x_i+\sum_{j=1}^2d y_j\otimes d y_j\big)+\cos\tau\big(\sum_{i=1}^2 d x_i\otimes d y_i+\sum_{j=1}^2 d y_j\otimes d x_j\big),$$ 
then $$F_\tau(\lambda(\nabla^2 v))=\text{constant}.$$
We refer readers to the results about gradient graphs in general dimensions in \cite{Wa}.
\end{remark}

\subsection{Organization and notations}
The rest of the paper is organized as follows. Theorem \ref{infity-sol} (1) will be proved in Section \ref{s-proof-1}. Our stage proposition, which is our main contribution, and  the proof for  Theorem \ref{infity-sol} (2)  will be presented in Section \ref{s-stage} and Section \ref{pfmain} respectively.

Finally, we provide some notations for convenience.  For any function $h: \Omega\to\R$, set 
\begin{align*}
&\|h\|_0=\sup_{\Omega}|h|,\,\,\|h\|_m=\sum_{j=0}^m\max_{|\beta|=j}\|\partial^\beta h\|_0,\\
& \|h\|_{m+\alpha}=\|h\|_m+[h]_{m+\alpha},
\end{align*}
where the H\"{o}lder semi-norm is defined by 
$$[h]_{m+\alpha}=\max_{|\beta|=m}\sup_{x\neq y}\frac{|\partial^\beta h(x)-\partial^\beta h(y)|}{|x-y|^\alpha}.$$
Here $m$ is a nonnegative integer, $\alpha\in (0, 1]$  a constant, and $\beta$ be a multi-index. We denote by $B_r$ the ball in $\R^2$ with radius $r$ centering at $0$. Let $\varphi\in C^\infty_0(B_1)$ be a standard two-dimensional mollifier. We define $\varphi_l(x)=l^{-2}\varphi(l^{-1}x)$, and $l>0$ is called the length-scale. We also define
$$\tilde{h}(x)=h* \varphi_l(x)=l^{-2}\int_{\R^2}h(x-y)\varphi(\frac{y}{l}) dy.$$
Some properties of mollification can be found in Lemma \ref{holder}. 

Besides, for $\xi=(\xi_1, \xi_2)\in\mathbb{R}^2$ and $\eta=(\eta_1, \eta_2)\in\mathbb{R}^2$, we write $\xi\otimes\eta$ as a $2\times 2$-matrix with entries $(\xi\otimes\eta)_{ij}=\xi_i\eta_j$. %and  $$\xi \odot\eta =\frac{1}{2}(\xi\otimes\eta+\eta\otimes\xi).$$
By a direct calculation, for a $C^3$-function $w$, we have $$\det \nabla^2 w=-\frac{1}{2}\mbox{curl curl }(\nabla w\otimes\nabla w)
    \mbox{ and } \Delta w=\mbox{curl curl } (w\cdot\mathrm{Id}).$$
For a $C^1$-function $w: \Omega\to \R^2$, note $\nabla w=(\partial_iw_j)_{2\times 2}$ and $$\sym \nabla w=\frac{1}{2}(\nabla w+\nabla w^T).$$

\section*{Acknowledgements}
Wentao Cao's research was supported in part by National Natural Science Foundation of China (Grant No.\,12471224). Zhehui Wang's research was supported in part by Young Scientists Fund of National Natural Science Foundation of China (Grant No.\,12401247), and Guangdong Basic and Applied Basic Research Foundation (Grant No.\,2023A1515110910).  

\section{Proof of Theorem \ref{infity-sol} (1)}\label{s-proof-1}
The proof of Theorem \ref{infity-sol} (1) is based on the application of the implicit function theorem.

\begin{proof}
Let $\widetilde{g}\in C^{2,\kappa}(\overline{\Omega})$ such that $\widetilde{g}=g$ on $\partial\Omega$. Set 
$$\mathcal{X}=\{\psi\in C^{2, \kappa}(\overline{\Omega}): \psi=0 \text{ on }\partial \Omega\}.$$ 
Define 
$G: \mathcal{X}\times C^\kappa(\overline\Omega)\rightarrow C^\kappa(\overline{\Omega})$ by
$$G(\psi, \Psi)=\Delta \psi+\Psi\cdot(\det\nabla^2 (\psi+\widetilde{g})-1)+\Delta\widetilde{g}.$$ Then we have the following conclusion.

\noindent{\bf Claim}: 
    There are a unique $\psi\in\mathcal{X}$ and a small enough constant $\mu\in(0,1)$ such that $G(\psi,\Psi)=0$
    for any $\Psi$ satisfying $\|\Psi\|_\kappa\leq\mu$.
    
In fact, the Fr\'{e}chet derivative of $G$ with respect to $\psi$ is given by
\begin{align*}
G_\psi(\psi, \Psi)\vartheta=\Delta \vartheta+\frac{d\,}{dt}\Big |_{t=0}\Psi\cdot(\det\nabla^2 (\psi+\widetilde{g}+t\vartheta)-1),
\end{align*}
and hence $G_\psi(\psi, 0)\vartheta=\Delta\vartheta$ for any $\vartheta\in \mathcal{X}$. 
Therefore, for any $\psi\in \mathcal X$,  $G_\psi(\psi, 0)$ is an invertible operator from $\mathcal{X}$ to $C^\kappa(\overline{\Omega})$. 

Next, take $\psi_0\in \mathcal X$ such that $\Delta\psi_0=-\Delta\widetilde{g}$. 
The existence of such $\psi_0$ is provided by the Schauder theory. 
By the implicit function theorem, the equation $G(\psi, \Psi)=0$ admits a unique solution $\psi=\psi_\Psi\in \mathcal X$ 
for each $\Psi$ with $\|\Psi\|_{C^\kappa(\overline\Omega)}\leq\mu$, where $\mu$ is a sufficiently small constant. Moreover, $\psi_\Psi$ is close to $\psi_0$ in the $C^{2,\kappa}(\overline{\Omega})$-norm. 

Let $\Theta: \overline{\Omega}\to (-\pi, \pi)$ be a $C^{1,\kappa}$ function. By the interpolation, we note 
$$\|f\|_{C^\kappa(B_r)}\leq \sigma\|f\|_{C^{1, \kappa}(B_r)}+(\frac{C}{\sigma^\kappa}+1)\|f\|_{L^\infty(B_r)} $$ 
for any $0<\sigma\leq r$ and $f\in C^{1, \kappa}(B_r)$. Hence, if $|\sin\Theta|\leq c_1$ with the small constant $c_1$ to be fixed, then we can get $$\|\sin\Theta\|_\kappa\leq C(\kappa, \|\Theta\|_{1+\kappa})(c_1+c_1^{1-\kappa}),$$ 
where $C(\kappa, \|\Theta\|_{1+\kappa})$ is a constant depending only on $ \kappa, \|\Theta\|_{1+\kappa}.$
By a direct computation,
\begin{align*}
\|\tan\Theta\|_\kappa&\leq \|\sin\Theta\|_0\|\frac{1}{\cos\Theta}\|_\kappa+\|\sin\Theta\|_\kappa\|\frac{1}{\cos\Theta}\|_0\\
&\leq 4\|\sin\Theta\|_\kappa\|\frac{1}{\cos\Theta}\|_1\\
&\leq C(\kappa, \|\Theta\|_{1+\kappa})\|\sin\Theta\|_\kappa (\|\frac{1}{\cos\Theta}\|_0+\|\frac{1}{\cos^2\Theta}\|_0\|\sin\Theta\|_0)\\
&\leq C(\kappa, \|\Theta\|_{1+\kappa})(c_1+c_1^{1-\kappa})(\frac{1}{\sqrt{1-c_1^2}}+\frac{c_1}{1-c_1^2}).  
\end{align*}
Now, we choose $c_1$ small enough such that $\|\tan\Theta\|_{C^\kappa(\overline\Omega)}\leq\mu$.
By the above Claim, there is a unique $\psi\in \mathcal X$ such that 
$$
\Delta \psi+\tan\Theta(\det\nabla^2 (\psi+\widetilde{g})-1)+\Delta\widetilde{g}=0.
$$
Then, $v:=\psi+\widetilde{g}$ uniquely satisfies
\begin{equation*}
\left\{
\begin{aligned}
&\Delta v+\tan\Theta(\det\nabla^2 v-1)=0 \mbox{ in } \Omega,\\
&v=g \mbox{ on } \partial\Omega.
\end{aligned}
    \right.
\end{equation*}

Hence, we complete the proof of Theorem \ref{infity-sol} (1).
\end{proof}

\section{The Stage Proposition}\label{s-stage}
The rest of the paper is devoted to showing Theorem \ref{infity-sol} (2). Similar to the treatment in \cite{Cao,CSze,LePa} for the Monge-Amp\`ere equation, we split \eqref{lag-eq2} (after dividing $\sin\Theta$) into  a differential system
\begin{align}
& -\mbox{curl curl } A=1,\label{eq-split-1}\\
&A=\mbox{ sym }\nabla w+\frac12\nabla v\otimes\nabla v-(v\cot\Theta)\cdot\mathrm{Id}+V\cdot\mathrm{Id},\label{eq-split-2}
\end{align}
where $V$ is the solution to
\begin{equation*}
\left\{
\begin{aligned}
        \Delta V &=F(v), \mbox{ in } \Omega,\\
        V&=0, \mbox{ on } \partial\Omega,
\end{aligned}
\right.
\end{equation*}
and $F(v)$ is defined as
\begin{equation}\label{Fv}
F(v)=2\nabla v\cdot\nabla \cot\Theta+v\Delta\cot\Theta.
\end{equation}
In fact, by a direct calculation, we note that $v$ satisfying \eqref{eq-split-1}-\eqref{eq-split-2} is a solution of the equation \eqref{lag-eq3} in the very weak sense.

To solve \eqref{eq-split-2}, our strategy is applying the technique of Nash-Kuiper construction to construct a sequence of subsolutions satisfying the boundary condition, whose limit is our desired solution.  
Set
\begin{equation}\label{Dvw}
D(v, w):=\mbox{ sym }\nabla w+\frac{1}{2}\nabla v\otimes \nabla v-(v\cot\Theta)\cdot\mathrm{Id}+V\cdot\mathrm{Id},
\end{equation}
then such subsolutions, which are called adapted subsolutions, are defined as below.
\begin{definition}\label{d-sub}
A pair of mappings $(v, w):\overline{\Omega}\to\R\times\R^{2}$ is called a $C^{1, \beta}(\beta<1)$ adapted subsolution of \eqref{eq-split-2} if $(v, w)\in C^{1, \beta}$ and
$$
A-D(v, w)=\rho^2(\textrm{Id}+H) \mbox{ in } \overline{\Omega},
$$
where $D(v, w)$ is as \eqref{Dvw}, and $\rho\in C(\overline{\Omega})$ is a nonnegative function with $\partial\Omega\subset\{x:\rho(x)=0\}$, and $H\in C(\overline{\Omega};\R^{2\times 2}_{sym})$ is a symmetric matrix such that $\textrm{Id}+H$ is positive definite. 
\end{definition}
Note that  the definition is a slight modification of that in \cite{Cao}.
To construct new adapted subsolutions, we need to add the deficit $A-D(v, w)$ to the given adapted subsolution $v.$ Since the deficit is of the form $\rho^2(\mathrm{Id}+H)$, we require the following stage proposition (in the terminology of Nash  \cite{Nash54}) for iteration, which is parallel to Proposition 3.1 in \cite{Cao} and whose proof is different due to the new error terms. The new error terms are then controlled through Lemma \ref{elliptic}.

\begin{proposition}\label{p:stage}
    Suppose $\Sigma\subset \R^2$ is a simply connected open bounded set with a $C^2$ boundary. Let $v\in C^2(\overline\Sigma)$ and $w\in C^2(\Sigma, \R^2)$ such that
\begin{align}
&\|v\|_1+\|w\|_1\leq \gamma,\label{eq-vw-c1}\\
&\|v\|_2+\|w\|_2\leq \delta^{\frac{1}{2}}\lambda,\label{eq-vw-c2}
\end{align}
and let $\rho\in C^1(\overline\Sigma)$, $H\in C^1(\overline\Sigma, \R^{2\times 2}_{sym})$ be such that 
\begin{align}
&\|\rho\|_0\leq \delta^\frac12,\quad\|\nabla \rho\|_0\leq \delta^{\frac{1}{2}}\lambda,\label{rho01}\\
&\|H\|_0\leq \lambda^{-\alpha},\quad\|\nabla H\|_0\leq\lambda^{1-\alpha}, \label{H01}
\end{align}
for some $\gamma>1$, $0<\delta,\,\alpha<1$ and $\lambda>1$ with $2\leq \lambda^\alpha$ and $\delta^{1/2}\lambda>1$. Then for any $\tau>1$, there exist functions $v^\star\in C^2(\overline\Sigma)$, $w^\star\in C^2(\overline\Sigma, \R^2)$ and a symmetric matrix $\E\in C^1(\overline\Sigma, \R^{2\times2}_{sym})$ such that 
\begin{align}
    & D(v^\star, w^\star)=D(v, w)+\rho^2(\mathrm{Id}+H)+\E,\,\, \mbox{in } \overline\Sigma,\\
    & (v^\star, w^\star)=(v, w),\,\, \E=0,\,\, \mbox{on } \overline\Sigma\setminus(\mathrm{supp}\rho+B_{\lambda^{-\tau}}), \label{eq-support}
\end{align}
and they satisfy the following estimates
\begin{align}
\|v^\star-v\|_0+\|w^\star-w\|_0&\leq C\delta^\frac12\lambda^{-\tau},\label{eq-vw-diff-0}\\
\|v^\star-v\|_1+\|w^\star-w\|_1&\leq C\delta^\frac{1}{2},\label{eq-vw-diff-1}\\
\|v^\star\|_2+\|w^\star\|_2&\leq C\delta^\frac{1}{2}\lambda^{2\tau-1}, \label{eq-vw-diff-2}
\end{align}
and
\begin{equation}
\|\E\|_0\leq \bar C\delta\lambda^{1-\tau},\quad \|\E\|_1\leq \bar C\delta\lambda^\tau, \label{eq-deficit-c01}
\end{equation}
where constant $C$ and $\bar C$ depend only on $\gamma$ and $\alpha$.
\end{proposition}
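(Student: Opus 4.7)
The plan is to follow the Nash--Kuiper / convex integration scheme developed in \cite{Cao} for the Monge--Amp\`ere case, and to isolate the novel error contributions coming from the local lower-order term $-(v\cot\Theta)\cdot\mathrm{Id}$ and the nonlocal term $V\cdot\mathrm{Id}$ in $D(v,w)$. The architecture consists of (i) mollifying $(v,w,\rho,H)$ at a scale $\ell\sim\lambda^{-\tau}$ to trade regularity for quantitative $C^{2}$ estimates via Lemma \ref{holder}; (ii) diagonalizing the target deficit as $\rho^{2}(\mathrm{Id}+H)=\sum_{k=1}^{2}a_{k}^{2}\,\eta_{k}\otimes\eta_{k}$ using Lemma \ref{diag}, which requires the simply-connectedness of $\Sigma$ and whose applicability is guaranteed by the positive-definiteness of $\mathrm{Id}+H$ under the hypothesis $\|H\|_{0}\leq\lambda^{-\alpha}\leq\tfrac12$; and (iii) performing two Nash--Kuiper primitive steps at frequency $\mu\sim\lambda^{\tau}$, each generating one of the rank-one pieces inside the classical part $\tfrac12\nabla v\otimes\nabla v+\sym\nabla w$ of $D$.

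At the $k$-th primitive step I would add
\[
v_{k}=v_{k-1}+\tfrac{a_{k}}{\mu}\,\Gamma_{k}(x)\sin(\mu\,\eta_{k}\cdot x),\qquad w_{k}=w_{k-1}-\tfrac{a_{k}^{2}}{4\mu}\sin(2\mu\,\eta_{k}\cdot x)\,\eta_{k}+R_{k},
\]
where $\Gamma_{k}$ is a smooth cut-off enforcing the support condition \eqref{eq-support} and $R_{k}$ is a lower-frequency corrector designed to cancel the cross-term $a_{k}\cos(\mu\,\eta_{k}\cdot x)\,\sym(\eta_{k}\otimes\nabla v_{k-1})$. A direct expansion of $\tfrac12\nabla v_{k}\otimes\nabla v_{k}+\sym\nabla w_{k}$ then exposes the desired piece $a_{k}^{2}\,\eta_{k}\otimes\eta_{k}$ together with Taylor remainders and commutators with the mollification; these errors are precisely the ones estimated in \cite{Cao}, and summing the two steps (plus the mollification defect in passing from $\tilde\rho,\tilde H$ back to $\rho,H$) recovers the full $\rho^{2}(\mathrm{Id}+H)$ term and yields a first block of $\E$ satisfying \eqref{eq-deficit-c01}. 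The $C^{0}$, $C^{1}$, $C^{2}$ estimates \eqref{eq-vw-diff-0}--\eqref{eq-vw-diff-2} on $v^{\star}-v$ and $w^{\star}-w$ follow from the explicit oscillatory form of the primitive steps with the matched choices $\mu\sim\lambda^{\tau}$ and $\ell\sim\lambda^{-\tau}$.

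The main obstacle, and the distinctive new feature of this setting, is the control of the extra contribution
\[
-(v^{\star}-v)\cot\Theta\cdot\mathrm{Id}+(V^{\star}-V)\cdot\mathrm{Id}
\]
to $D(v^{\star},w^{\star})-D(v,w)$, which must be absorbed into $\E$ and shown to respect the bounds $\|\E\|_{0}\leq\bar C\delta\lambda^{1-\tau}$, $\|\E\|_{1}\leq\bar C\delta\lambda^{\tau}$. The first piece is pointwise local and benign: combining $\|v^{\star}-v\|_{0}\leq C\delta^{1/2}\lambda^{-\tau}$ and $\|v^{\star}-v\|_{1}\leq C\delta^{1/2}$ with the $C^{2,\kappa}$ regularity of $\Theta$ and the hypothesis $\delta^{1/2}\lambda\geq 1$ meets the required bounds. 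The second piece is nonlocal through the Poisson problem $\Delta(V^{\star}-V)=F(v^{\star}-v)$ with $F$ as in \eqref{Fv}; here I would invoke the elliptic estimate of Lemma \ref{elliptic-est} (whose role is explicitly announced in the strategy discussion in the introduction) to convert the control on $v^{\star}-v$ in H\"older norms directly into bounds on $V^{\star}-V$ of the required strength. Careful bookkeeping of the $\lambda$, $\delta$, and $\mu$ dependencies of each contribution should then close all of \eqref{eq-vw-diff-0}--\eqref{eq-deficit-c01} simultaneously, completing the stage.
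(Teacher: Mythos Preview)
Your overall plan follows the paper's architecture: mollify at scale $\ell=\lambda^{-\tau}$, diagonalize via Lemma~\ref{diag}, add two rank-one pieces, and absorb the lower-order $\cot\Theta$ and nonlocal $V$ contributions into $\E$ using the elliptic estimate. Two points need correcting.

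First, the phases are not linear. Lemma~\ref{diag} produces functions $\Phi_{1},\Phi_{2}$ with $\mathrm{Id}+\tilde H=\tilde a^{2}\bigl(\nabla\Phi_{1}\otimes\nabla\Phi_{1}+\nabla\Phi_{2}\otimes\nabla\Phi_{2}\bigr)$, and the perturbations in the paper take the form $\Gamma_{1}\bigl(a(x),\mu\,\Phi_{k}(x)\bigr)$, not $\sin(\mu\,\eta_{k}\cdot x)$ with constant $\eta_{k}$. Since you explicitly invoke Lemma~\ref{diag} this may be notational, but the formula $\sin(\mu\,\eta_{k}\cdot x)$ is literally incorrect: the oscillation must be along $\Phi_{k}(x)$ so that differentiation produces the variable direction $\nabla\Phi_{k}$, and the bounds \eqref{eq-phi-cj} on $\nabla\Phi$ enter the error estimates.

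Second, and more substantively, the two primitive steps cannot both run at frequency $\mu\sim\lambda^{\tau}$. After the first step the intermediate function $v^{\lozenge}$ already oscillates at frequency $\lambda^{\tau}$, so $\|\nabla^{2}v^{\lozenge}\|_{0}\sim\delta^{1/2}\lambda^{\tau}$; the second-step error term of type $\mu^{-1}\Gamma_{1}\nabla^{2}\tilde v^{\lozenge}$ would then be of size $\delta$, too large for the required bound $\|\E\|_{0}\leq \bar C\delta\lambda^{1-\tau}$. The paper resolves this by re-mollifying $(v^{\lozenge},w^{\lozenge})$ at the finer scale $\lambda^{1-2\tau}$ and performing the second step at the higher frequency $\lambda^{2\tau-1}$. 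This frequency jump is precisely what produces the $C^{2}$ bound $\delta^{1/2}\lambda^{2\tau-1}$ in \eqref{eq-vw-diff-2}; a single frequency $\lambda^{\tau}$ would give only $\delta^{1/2}\lambda^{\tau}$, inconsistent with the stated conclusion.

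Your treatment of the new errors $-(v^{\star}-v)\cot\Theta\cdot\mathrm{Id}$ and $(V^{\star}-V)\cdot\mathrm{Id}$ is on target; for the $C^{1}$ bound on $V^{\star}-V$ the paper supplements Lemma~\ref{elliptic} with a Schauder $C^{1,\sigma}$ estimate on the Poisson problem. Also, no separate cut-off $\Gamma_{k}$ is needed: the amplitude $a=\tilde a\tilde\rho$, with $\tilde\rho=\rho\ast\varphi_{\ell}$, already has support contained in $\mathrm{supp}\,\rho+B_{\lambda^{-\tau}}$, which is how \eqref{eq-support} is obtained.
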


Our proof for the proposition above differs from that for Proposition 3.1 in \cite{Cao} due to the presence of the phase function $\Theta$. Indeed, the key ingredients of our proof consist of applying the elliptic estimates in Lemma \ref{elliptic} to control the error terms arising from $\Theta$.

\begin{proof} We divide the proof into three steps. 

{\it Step 1. Mollification.} Take the lengthscale $\ell=\lambda^{-\tau}$. Set $\tilde\rho=\rho*\varphi_{\ell}$ and $\tilde{H}=H*\varphi_{\ell}$. Then by \eqref{rho01}, \eqref{H01} and Lemma \ref{holder}, we know,
\begin{align*}
    &\|\tilde\rho\|_0\leq C\delta^{\frac{1}{2}},\quad \|\tilde\rho\|_j\leq C(j)\ell^{1-j}[\rho]_1\leq C(j)\delta^{\frac{1}{2}}\lambda \ell^{1-j}, \\
    &\|\tilde \rho-\rho\|_0\leq C\ell[\rho]_1\leq C\delta^{\frac{1}{2}}\lambda \ell,\\
    &\|\tilde H\|_0\leq C\lambda^{-\alpha}, \quad \|\tilde H\|_j\leq C(j)\ell^{1-j}[H]_1\leq C(j)\lambda^{1-\alpha} \ell^{1-j},\\
    &\|\tilde H-H\|_0\leq C\ell[H]_1\leq C\lambda^{1-\alpha} \ell,
\end{align*}
for $j\in \N_+$. Let $\tilde{h}=\tilde{\rho}^2(\mathrm{Id}+\tilde{H})$ and $h=\rho^2(\mathrm{Id}+{H})$, then
\begin{align*}
    \|\tilde{h}-h\|_0&\leq\|\tilde{\rho}^2-\rho^2\|_0+\|\tilde{\rho}^2\tilde H-\rho^2H\|_0\leq C\delta\lambda \ell+C\delta\lambda^{1-\alpha}\ell\leq C\delta\lambda^{1-\tau}, \\
    \|\tilde{h}-h\|_1
    &\leq\|\tilde{\rho}^2-\rho^2\|_1+\|\tilde{\rho}^2(\tilde H-H)\|_1+\|(\tilde{\rho}^2-\rho^2)H\|_1\\
    &\leq C(\delta\lambda+\delta\lambda^{1-\alpha})\leq C\delta\lambda. 
\end{align*}
Besides, it holds that
$$\textrm{Id}+\tilde H\geq(1-\lambda^{-\alpha})\textrm{Id}>\tfrac12\textrm{Id},$$
and
$$\|\textrm{Id}+\tilde H\|_{\alpha'}\leq 1+\|\tilde H\|_{\alpha'}\leq C(1+\lambda^{\alpha'-\alpha})\leq C,$$
provided $0<\alpha'<\alpha.$
Thus applying Lemma \ref{diag} to $\textrm{Id}+\tilde H$, we get
$$\textrm{Id}+\tilde H=\tilde a^2(\nabla\Phi_1\otimes\nabla\Phi_1+\nabla\Phi_2\otimes\nabla\Phi_2),$$
with $\tilde a, \Phi=(\Phi_1, \Phi_2)$ satisfying
\begin{equation*}
\begin{split}
&\tilde a\geq C^{-1},\quad \det(\nabla\Phi)\geq C^{-1},\\
&\|\tilde a\|_{j+\alpha'}+\|\nabla\Phi\|_{j+\alpha'}
\leq\|\mathrm{Id}+\tilde{H}\|_{j+\alpha'}\\
\leq &C(1+\|\tilde{H}\|_j\ell^{-\alpha'})
\leq C(j,\alpha', \alpha)\lambda^{1-\alpha}\ell^{1-j-\alpha'},
\end{split}
\end{equation*}
for any $j\geq 1$, and 
\begin{equation*}
\|\tilde a\|_{\alpha'}+\|\nabla\Phi\|_{\alpha'}\leq \|\mathrm{Id}+\tilde{H}\|_{\alpha'}\leq C,
\end{equation*}
Taking $\alpha'\leq\frac\alpha\tau,$ we have
\begin{equation}\label{eq-phi-cj}
C^{-1}\leq |\nabla\Phi_1|,\,|\nabla\Phi_2|\leq C,\quad \|\nabla\Phi\|_j\leq C(j)\lambda\ell^{1-j}\leq C(j)\lambda^{1-\tau+\tau j},
\end{equation}
and
$$\|\tilde a\|_j\leq C(j)\lambda^{1-\tau+\tau j} \text{ for any } j\geq1. $$
Setting $a=\tilde a\tilde \rho$, we obtain
$$\tilde h=a^2(\nabla\Phi_1\otimes\nabla\Phi_1+\nabla\Phi_2\otimes\nabla\Phi_2),$$
and
\begin{equation}\label{eq-a-c2}
\|a\|_0\leq C\delta^{1/2},\quad \|a\|_j\leq C(j)\delta^{1/2}\lambda^{1-\tau+\tau j}
\end{equation}
for any $j\geq1.$

\smallskip

{\it Step 2. Adding the first rank-one matrix.} Set $\tilde v=v*\varphi_{\ell}$ and $\tilde{w}=w*\varphi_{\ell}$.  With Lemma \ref{holder}, we get
\begin{align}
    & \|\tilde v-v\|_1+\|\tilde w-w\|_1\leq C\ell(\|v\|_2+\|w\|_2)\leq C\delta^{\frac{1}{2}}\lambda^{1-\tau},\label{e:tilde-vw-C1}\\
    &\|\tilde v\|_{2+j}+\|\tilde w\|_{2+j}\leq C\ell^{-j}(\|v\|_2+\|w\|_2)\leq C\delta^{\frac{1}{2}}\lambda^{1+\tau j}.\label{e:tilde-vw-C2}
\end{align}
Define
\begin{align*}
v^\lozenge(x)=&v(x)+\lambda^{-\tau}\Gamma_1(a(x), \lambda^{\tau}\Phi_1(x)),\\
w^\lozenge(x)=&w(x)-\lambda^{-\tau}\Gamma_1(a(x), \lambda^{\tau}\Phi_1(x))\nabla\tilde{v}(x)\\
&+\lambda^{-\tau}\Gamma_2(a(x), \lambda^{\tau}\Phi_1(x))\nabla\Phi_1(x),
\end{align*}
where  $\Gamma_i, i=1, 2$ are defined as follows:
\begin{equation}\label{Gamma}
\Gamma_1(s, t)=\frac{s}{\pi}\sin(2\pi t),\quad \Gamma_2(s, t)=-\frac{s^2}{4\pi}\sin(4\pi t).
\end{equation} 
It is easy to see that $\Gamma_i(s, t)=\Gamma_i(s, t+1)$, $i=1, 2$ and 
  \begin{equation}\label{e-gamma12}
    \frac{1}{2}|\partial_t\Gamma_1(s, t)|^2+\partial_t\Gamma_2(s, t)=s^2.
  \end{equation}
From both definitions of $\Gamma_i, i=1, 2$ we gain
\begin{equation}\label{eq-vw-natural-1}
(v^\lozenge, w^\lozenge)=(v, w) \textit{ on } \overline\Omega\setminus\textrm{supp } a.
\end{equation}
Observe that
\begin{equation}\label{e:vj}
\|v^\lozenge-v\|_j\leq \lambda^{-\tau}\|\Gamma_1\|_j,
\end{equation}
and
\begin{equation}\label{e:wj}
\begin{split}
\|w^\lozenge-w\|_j\leq C(j)\lambda^{-\tau}(&\|\Gamma_1\|_j\|\tilde{v}\|_1+\|\Gamma_1\|_0\|\tilde{v}\|_{j+1}\\
&+\|\Gamma_2\|_j\|\nabla\Phi_1\|_0+\|\Gamma_2\|_0\|\nabla\Phi_1\|_j),
\end{split}
\end{equation}
for $ j=0, 1, 2.$  Here $\|\Gamma_i\|_j$ denotes the $C^j$ norms of the function $x\rightarrow\Gamma_i(a(x), \lambda^\tau\Phi_1(x)).$  
\begin{lemma}\label{Gamma-estimate} 
Assume all the assumptions of Proposition \ref{p:stage} are fulfilled, then  the following estimates hold.
    \begin{equation}\label{e:step-gamma1-cjnorm}
\begin{split}
&\|\Gamma_1\|_0+\|\partial_t\Gamma_1\|_0+\|\partial_t^2\Gamma_1\|_0\leq C\delta^{1/2},\\
&\|\Gamma_1\|_1+\|\partial_t\Gamma_1\|_1\leq C\delta^{1/2}\lambda^{\tau},\\
&\|\partial_s\Gamma_1\|_j\leq C\lambda^{\tau j}, \, j=0,1,\\
&\|\Gamma_1\|_2\leq C\delta^{1/2}\lambda^{2\tau},
\end{split}
\end{equation}
and
\begin{equation}\label{e:step-gamma2-cjnorm}
\begin{split}
&\|\Gamma_2\|_0+\|\partial_t\Gamma_2\|+\|\partial_t^2\Gamma_2\|_0\leq C\delta,\\
&\|\Gamma_2\|_1+\|\partial_t\Gamma_2\|_1\leq C\delta\lambda^\tau,\\
&\|\partial_s\Gamma_2\|_j\leq C\delta^{1/2}\lambda^{\tau j},\, j=0,1,\\
&\|\Gamma_2\|_2\leq C\delta\lambda^{2\tau}.
\end{split}
\end{equation}
\end{lemma}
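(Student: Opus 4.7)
The plan is a direct chain-rule calculation: each of the ten claimed estimates follows by differentiating $\Gamma_i(a(x),\lambda^\tau\Phi_1(x))$ the appropriate number of times, bounding each resulting term using the elementary bounds on the partial derivatives of $\Gamma_i$ as functions of $(s,t)$, and then substituting the norms of $a$ and $\nabla\Phi_1$ given by \eqref{eq-phi-cj} and \eqref{eq-a-c2}. No further analytic input beyond these two estimates is required, so the lemma is essentially bookkeeping.

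For the $C^0$ bounds, one simply notes that every partial $\partial_s^p\partial_t^q\Gamma_i$ is, up to a universal constant, a monomial in $s$ of degree $2-q$ for $\Gamma_2$ (respectively $1-q$ for $\Gamma_1$, when this is $\ge 0$) times a trigonometric function of $t$, so substituting $s=a(x)$ and using $\|a\|_0\leq C\delta^{1/2}$ yields the stated powers of $\delta$; in particular $\partial_s\Gamma_1$ carries no $a$-factor, which accounts for the single estimate without a $\delta^{1/2}$. For the $C^1$ bounds I apply
\[
\nabla\bigl[F(a(x),\lambda^\tau\Phi_1(x))\bigr]=\partial_s F\,\nabla a+\partial_t F\,\lambda^\tau\nabla\Phi_1
\]
with $F=\Gamma_i$, $\partial_s\Gamma_i$, or $\partial_t\Gamma_i$. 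Using $\|\nabla a\|_0\leq C\delta^{1/2}\lambda$ and $\|\lambda^\tau\nabla\Phi_1\|_0\leq C\lambda^\tau$, the oscillatory $\partial_t$-term always dominates because $\tau>1$, and this yields the advertised $\lambda^\tau$-rate in every case.

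For the $C^2$ estimates I expand the Hessian of the composition into its five natural summands:
\[
\partial_{ss}\Gamma_i\,\nabla a\otimes\nabla a,\;\partial_{st}\Gamma_i\,\lambda^\tau\,\mathrm{sym}(\nabla\Phi_1\otimes\nabla a),\;\partial_s\Gamma_i\,D^2a,
\]
\[
\partial_{tt}\Gamma_i\,\lambda^{2\tau}\,\nabla\Phi_1\otimes\nabla\Phi_1,\;\partial_t\Gamma_i\,\lambda^\tau D^2\Phi_1.
\]
With $\|D^2 a\|_0\leq C\delta^{1/2}\lambda^{1+\tau}$ and $\|D^2\Phi_1\|_0\leq C\lambda$ from \eqref{eq-phi-cj}--\eqref{eq-a-c2}, and the $C^0$ bounds on the partials $\partial^\alpha\Gamma_i$ just established, each of the four non-oscillatory summands is bounded by $C\delta^{1/2}\lambda^{1+\tau}$ for $\Gamma_1$ and by $C\delta\lambda^{1+\tau}$ for $\Gamma_2$, whereas the oscillatory summand $\partial_{tt}\Gamma_i\,\lambda^{2\tau}\nabla\Phi_1\otimes\nabla\Phi_1$ contributes $C\delta^{1/2}\lambda^{2\tau}$ and $C\delta\lambda^{2\tau}$ respectively. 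The assumption $\tau>1$ gives $\lambda^{1+\tau}\leq\lambda^{2\tau}$, so the oscillatory term dominates and the claimed bounds $\|\Gamma_1\|_2\leq C\delta^{1/2}\lambda^{2\tau}$, $\|\Gamma_2\|_2\leq C\delta\lambda^{2\tau}$ follow. The only subtlety is precisely this final absorption, but it is exactly what the hypothesis $\tau>1$ is designed to deliver; there is no genuine obstacle beyond keeping track of which factor of $\lambda$ comes from which derivative.
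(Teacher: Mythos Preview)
Your proof is correct and follows essentially the same approach as the paper's sketch: both record the elementary pointwise bounds $|\partial_s^p\partial_t^q\Gamma_i(s,t)|\leq C|s|^{d_i-p}$ (with $d_1=1$, $d_2=2$, and the convention that this is zero when $p>d_i$), then apply the chain rule and the estimates \eqref{eq-phi-cj}--\eqref{eq-a-c2} on $a$ and $\nabla\Phi_1$. One harmless slip: the $s$-degree of $\partial_s^p\partial_t^q\Gamma_i$ is $d_i-p$, not $d_i-q$ as you wrote; your subsequent computations (e.g.\ that $\partial_s\Gamma_1$ carries no $a$-factor) use the correct version anyway.
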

\noindent The proof of Lemma \ref{Gamma-estimate} is the same as that in \cite{Cao}, but for the completeness, we put it in Appendix. 

So from \eqref{e:vj}, \eqref{eq-phi-cj}, \eqref{e:wj}, and Lemma \ref{Gamma-estimate},  we get
\begin{equation}\label{e:v1-estimate}
\|v^\lozenge-v\|_j\leq C\delta^{1/2}\lambda^{\tau j-\tau}, \,\, j=0, 1, 2,
\end{equation}
and
\begin{equation}\label{e:w1-estimate}
\|w^\lozenge-w\|_j\leq C(\delta^{1/2}\lambda^{\tau j-\tau}+\delta\lambda^{\tau j-\tau})\leq C\delta^{1/2}\lambda^{\tau j-\tau}, j=0, 1, 2.
\end{equation}
Hence by \eqref{eq-vw-c1}, \eqref{eq-vw-c2}, \eqref{e:v1-estimate} and \eqref{e:w1-estimate}, one arrives at
\begin{equation}\label{eq-vw-natural-c2}
\begin{split}
&\|v^\lozenge\|_1+\|w^\lozenge\|_1\leq C,\\
&\|v^\lozenge\|_2+\|w^\lozenge\|_2\leq C\delta^{1/2}\lambda^\tau.
\end{split}
\end{equation}

Next we shall control the $C^j$($j=0, 1$) norms of the error matrix $\mathcal{E}.$ Taking derivatives gives
\begin{align*}
\nabla v^\lozenge&=\nabla v+\lambda^{-\tau}\partial_s\Gamma_1\nabla a+\partial_t\Gamma_1\nabla\Phi_1,\\
\nabla w^\lozenge&=\nabla w-\lambda^{-\tau}\partial_s\Gamma_1\nabla\tilde{v}\otimes\nabla a-\partial_t\Gamma_1\nabla\tilde{v}\otimes\nabla\Phi_1-\lambda^{-\tau}\Gamma_1\nabla^2\tilde{v}\\
&\quad +\lambda^{-\tau}\partial_s\Gamma_2\nabla\Phi_1\otimes\nabla a+\partial_t\Gamma_2\nabla\Phi_1\otimes\nabla\Phi_1.
\end{align*}
Hence, with \eqref{e-gamma12}, we have
\begin{align*}
\mathcal{E}_1=&D(v^\lozenge, w^\lozenge)-\left[D(v, w)+a^2\nabla\Phi_1\otimes\nabla\Phi_1\right]
%&(\tfrac12\nabla v^\lozenge\otimes\nabla v^\lozenge+\textit{sym }\nabla w^\lozenge)-(\tfrac12\nabla v\otimes\nabla v+\textit{sym }\nabla w)\\
=\sum_{k=1}^5\mathcal{E}_{1k}
\end{align*}
with
\begin{align*}
\mathcal{E}_{11}:=&\lambda^{-\tau}\left[\partial_t\Gamma_1\partial_s\Gamma_1\sym(\nabla\Phi_1\otimes\nabla a)-\Gamma_1\nabla^2\tilde{v}+\partial_s\Gamma_2\textrm{sym}(\nabla\Phi_1\otimes\nabla a)\right],\\
\mathcal{E}_{12}:=&{\lambda}^{-\tau}\partial_s\Gamma_1\textrm{sym}\left[(\nabla v-\nabla\tilde{v})\otimes\nabla a\right]+\partial_t\Gamma_1\textrm{sym}\left[(\nabla v-\nabla\tilde{v})\otimes\nabla\Phi_1\right],\\
\mathcal{E}_{13}:=&\tfrac12\lambda^{-2\tau}(\partial_s\Gamma_1)^2\nabla a\otimes\nabla a,\\
\mathcal{E}_{14}:=&(V^\lozenge-V)\cdot\mathrm{Id}, \,\,(\text{ here } \Delta V^\lozenge =F(v^\lozenge)) \\
\mathcal{E}_{15}:=&\left[(v^\lozenge-v)\cot\Theta\right]\cdot\mathrm{Id}
\end{align*}
Estimates of the first three errors and their proofs are the same as those in \cite{Cao}.
\begin{lemma}\label{Error-estimate}
Assume all the assumptions of Proposition \ref{p:stage} are fulfilled, then  for $j=0, 1$, the following estimates hold:
 \begin{align*}
&\|\mathcal{E}_{11}\|_j\leq C\delta\lambda^{1-\tau(1-j) },\\
& \|\mathcal{E}_{12}\|_j\leq C\delta\lambda^{1-\tau(1-j)},\\
&\|\mathcal{E}_{13}\|_j\leq C\delta\lambda^{2-j-2\tau(1-j)}.
 \end{align*}
\end{lemma}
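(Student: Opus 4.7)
The plan is a direct term-by-term estimation via the Leibniz rule, drawing on four groups of bounds already assembled in Step 1: the estimates \eqref{eq-a-c2} on $a$, the bounds \eqref{eq-phi-cj} on $\Phi_1$, the mollification estimates \eqref{e:tilde-vw-C1}--\eqref{e:tilde-vw-C2} for $\tilde v$, and the $C^j$-bounds on $\Gamma_1$, $\Gamma_2$ and their $s$- and $t$-derivatives furnished by Lemma \ref{Gamma-estimate}. Since the proof coincides with the one in \cite{Cao}, the task is essentially a bookkeeping of the exponents of $\delta$ and $\lambda$.

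For $\mathcal{E}_{11}$ I would bound its three summands separately. For the first summand $\lambda^{-\tau}\partial_t\Gamma_1\,\partial_s\Gamma_1\,\sym(\nabla\Phi_1\otimes\nabla a)$, the $C^0$-norm is at most $\lambda^{-\tau}\cdot\delta^{1/2}\cdot 1\cdot 1\cdot\delta^{1/2}\lambda=C\delta\lambda^{1-\tau}$, and one additional derivative contributes at most a factor $\lambda^{\tau}$ (whether it lands on a $\Gamma_i$-factor, on $\nabla a$, or on $\nabla\Phi_1$), giving $C\delta\lambda$. The summands $-\lambda^{-\tau}\Gamma_1\nabla^2\tilde v$ and $\lambda^{-\tau}\partial_s\Gamma_2\sym(\nabla\Phi_1\otimes\nabla a)$ are treated identically, using $\|\Gamma_1\|_0\leq C\delta^{1/2}$, $\|\nabla^2\tilde v\|_0\leq C\delta^{1/2}\lambda$, and $\|\partial_s\Gamma_2\|_0\leq C\delta^{1/2}$.

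For $\mathcal{E}_{12}$ the decisive input is the mollification bound $\|\nabla v-\nabla\tilde v\|_0\leq C\delta^{1/2}\lambda^{1-\tau}$ coming from \eqref{e:tilde-vw-C1}; combined with the $C^0$-bounds on $\partial_s\Gamma_1$, $\partial_t\Gamma_1$, $\nabla a$, $\nabla\Phi_1$ and the prefactor $\lambda^{-\tau}$, this yields $C\delta\lambda^{1-\tau}$ at $j=0$, and the $j=1$ estimate again costs at most one $\lambda^{\tau}$. For $\mathcal{E}_{13}=\tfrac12\lambda^{-2\tau}(\partial_s\Gamma_1)^2\nabla a\otimes\nabla a$ the $C^0$-norm is $\lambda^{-2\tau}\cdot 1\cdot(\delta^{1/2}\lambda)^2=C\delta\lambda^{2-2\tau}$, matching the claimed exponent $2-j-2\tau(1-j)$ at $j=0$; one more derivative produces at most a factor $\lambda^{\tau}$, and since $\tau>1$ this stays within $C\delta\lambda$ at $j=1$.

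The only delicate point is the $C^1$ bookkeeping: because $\Gamma_i$ is evaluated at $(a(x),\lambda^{\tau}\Phi_1(x))$, a derivative hitting a $\partial_t^k\Gamma_i$-factor brings in $\lambda^{\tau}$ whereas a derivative hitting $\Phi_1$, $\tilde v$ or $a$ alone brings in only $\lambda$. The cleanest way to avoid mistakes is to read off all such norms of composed functions directly from Lemma \ref{Gamma-estimate} together with \eqref{eq-phi-cj} and \eqref{eq-a-c2}, rather than unpacking the chain rule by hand.
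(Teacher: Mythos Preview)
Your proposal is correct and follows essentially the same approach as the paper's own proof: both argue term-by-term via the Leibniz rule, using exactly the same ingredients (\eqref{eq-phi-cj}, \eqref{eq-a-c2}, \eqref{e:tilde-vw-C1}--\eqref{e:tilde-vw-C2}, and Lemma~\ref{Gamma-estimate}), and both note that the $j=1$ case is obtained by the same computation with an extra factor of at most $\lambda^{\tau}$ from the chain rule. Your write-up is in fact slightly more explicit about the $j=1$ bookkeeping than the paper's appendix, which simply says the gradients ``could be calculated directly by the chain rule, then the corresponding estimates can be got similarly.''
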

\noindent For completeness, we put the proof of Lemma \ref{Error-estimate} in Appendix.

To estimate $\E_{14}$, we set $$\zeta=2(v^\lozenge-v)\nabla \cot\Theta,\quad f=-(v^\lozenge-v)\Delta\cot\Theta.$$ 
Note that
\begin{equation*}
\left\{
\begin{aligned}
\Delta(V^\lozenge-V)=F(v^\lozenge)-F(v)&=\Div \zeta+f, \mbox{ in } \Omega,\\
V^\lozenge-V&=0, \mbox{ on } \partial\Omega.
\end{aligned}
\right.
\end{equation*}
Hence we require the following elliptic estimates. 
\begin{lemma}\label{elliptic}
Let $\Omega\subset\R^n$ be a bounded domain. Suppose $\zeta=(\zeta_1,\cdots,\zeta_n)$ where $\zeta_i\in C^1(\Omega)\cap C^0(\overline{\Omega})$, $i=1,\cdots,n$, and $f\in C^0(\overline{\Omega})$. Let $u\in C^2(\Omega)\cap C^0(\overline{\Omega})$ be the solution to 
\begin{align*}
\begin{cases}
    \Delta u=\Div \zeta+f, &\mbox{ in } \Omega,\\
    \quad u=0, &\mbox{ on } \partial\Omega.
\end{cases}
\end{align*}
Then,
\begin{equation}\label{elliptic-est}
    \|u\|_0\leq C(n, \Omega)(\|f\|_0+\|\zeta\|_0),
\end{equation}
where $C(n, \Omega)$ is a constant depending only on $n$ and $\Omega$.
\end{lemma}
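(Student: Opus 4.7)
The plan is to split $u=u_1+u_2$, where $u_1,u_2\in C^2(\Omega)\cap C^0(\overline{\Omega})$ solve
\[
\begin{cases}\Delta u_1=f\text{ in }\Omega,\\ u_1=0\text{ on }\partial\Omega,\end{cases}\qquad
\begin{cases}\Delta u_2=\Div\zeta\text{ in }\Omega,\\ u_2=0\text{ on }\partial\Omega,\end{cases}
\]
and to estimate each piece separately in $L^\infty$.

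For $u_1$, I would use the classical maximum principle with a quadratic barrier. Fix $R>0$ so that $\Omega\subset B_R(0)$. The function $\psi(x)=\frac{\|f\|_0}{2n}(R^2-|x|^2)$ satisfies $-\Delta\psi=\|f\|_0$ and $\psi\geq 0$ on $\partial\Omega$, so $\Delta(\pm u_1-\psi)\geq 0$ in $\Omega$ with $\pm u_1-\psi\leq 0$ on $\partial\Omega$. The weak maximum principle then yields $\|u_1\|_0\leq \frac{R^2}{2n}\|f\|_0\leq C(n,\Omega)\|f\|_0$.

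For $u_2$, the key observation is to absorb the derivative by convolving with the fundamental solution $\Gamma$ of $\Delta$ on $\R^n$. Extend $\zeta$ by $0$ outside $\Omega$ and define
\[
\widetilde{v}(x)=-\int_{\Omega}\nabla_y\Gamma(x-y)\cdot\zeta(y)\,dy,\qquad x\in\R^n.
\]
Since $|\nabla\Gamma(z)|\leq C_n|z|^{1-n}$ is locally integrable, a direct polar-coordinate computation gives
\[
\sup_{x\in\R^n}\int_{\Omega}|\nabla_y\Gamma(x-y)|\,dy\leq C(n,\Omega),
\]
hence $\|\widetilde{v}\|_{L^\infty(\overline{\Omega})}\leq C(n,\Omega)\|\zeta\|_0$. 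A formal integration by parts, which in the presence of the $|x-y|^{1-n}$ singularity is justified via standard distributional arguments (approximating $\zeta$ by smooth compactly supported vector fields in $\Omega$), shows $\Delta\widetilde{v}=\Div\zeta$ in the distributional sense inside $\Omega$. Then $h:=u_2-\widetilde{v}$ is harmonic in $\Omega$ with continuous boundary data $h=-\widetilde{v}$ on $\partial\Omega$, so the maximum principle gives $\|h\|_0\leq\|\widetilde{v}\|_0$ and thus $\|u_2\|_0\leq 2\|\widetilde{v}\|_0\leq C(n,\Omega)\|\zeta\|_0$. Combining the two pieces yields \eqref{elliptic-est}.

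The main obstacle, if any, is the justification that $\Delta\widetilde{v}=\Div\zeta$ distributionally on $\Omega$ when $\zeta$ is only assumed continuous up to the boundary and $C^1$ in the interior; this is handled by mollifying $\zeta$ inside $\Omega$, applying the identity for smooth vector fields (where it follows from $\Delta\Gamma=\delta_0$ and an integration by parts), and passing to the limit using the $L^1_{\mathrm{loc}}$ bound on $\nabla\Gamma$. The rest is routine. An equivalent alternative, if one prefers to avoid extending $\zeta$, is to use the Green's function $G(x,y)$ of $\Omega$ together with its pointwise bound $|\nabla_y G(x,y)|\leq C|x-y|^{1-n}$, write $u_2(x)=\int_\Omega\nabla_y G(x,y)\cdot\zeta(y)\,dy$ and $u_1(x)=-\int_\Omega G(x,y)f(y)\,dy$, and estimate both integrals uniformly in $x$ by the same integrability of $|\nabla\Gamma|$ and $|\Gamma|$.
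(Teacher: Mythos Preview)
Your argument is correct and takes a genuinely different route from the paper. Both proofs split $u$ into an $f$-piece and a $\Div\zeta$-piece and handle the former by the maximum principle with a quadratic barrier. For the $\Div\zeta$-piece, however, the paper proceeds by a duality/energy argument: it solves $\Delta\phi=u_1$ with zero boundary data, uses $\int u_1^2=-\int\nabla\phi\cdot\zeta$ together with $\|\nabla\phi\|_{L^2}\le\|\phi\|_{L^2}^{1/2}\|u_1\|_{L^2}^{1/2}$ and $\|\phi\|_0\le C\|u_1\|_0$ to get $\|u_1\|_{L^2}^{3/2}\le C\|u_1\|_0^{1/2}\|\zeta\|_0$, and then closes the loop by invoking the De Giorgi--Nash--Moser type $L^\infty$ bound of Theorem~8.15 in Gilbarg--Trudinger, namely $\|u_1\|_0\le C(\|\zeta\|_0+\|u_1\|_{L^2})$, followed by a short case analysis. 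Your potential-theoretic approach is more elementary and self-contained: it produces an explicit bounded solution $\widetilde v$ of $\Delta\widetilde v=\Div\zeta$ via the kernel $\nabla\Gamma$, whose $L^\infty$ bound follows from the local integrability of $|z|^{1-n}$, and then corrects by a harmonic function controlled by the maximum principle. In particular you avoid Theorem~8.15 of \cite{GT} entirely and obtain a constant depending only on $n$ and $\mathrm{diam}(\Omega)$. One small caveat: the Green's function alternative you sketch at the end relies on the pointwise bound $|\nabla_yG(x,y)|\le C|x-y|^{1-n}$, which typically needs some regularity of $\partial\Omega$ not assumed in the lemma; your main Newtonian-potential argument does not suffer from this and is the cleaner option here.
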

\begin{proof}
Let $u_1$ be the solution of 
\begin{align*}
\begin{cases}
    \Delta u_1=\Div \zeta, &\mbox{ in } \Omega,\\
    \quad u_1=0, &\mbox{ on } \partial\Omega,
\end{cases}
\end{align*}
and $u_2$ be the solution of 
\begin{align*}
\begin{cases}
    \Delta u_2=f, &\mbox{ in } \Omega,\\
    \quad u_2=0, &\mbox{ on } \partial\Omega,
\end{cases}
\end{align*}
then $u=u_1+u_2$, and 
\begin{equation}\label{est-1}
    \|u\|_0\leq\|u_1\|_0+\|u_2\|_0.
\end{equation}
By the maximum principle, 
\begin{equation}\label{est-2}
    \|u_2\|_0\leq C(n, \diam(\Omega))\|f\|_0,
\end{equation}
where $\diam(\Omega)$ is the diameter of $\Omega$. 
Now \eqref{elliptic-est} will follow from \eqref{est-1}, \eqref{est-2} and the claim below:

\noindent\textbf{Claim}: $\|u_1\|_0\leq C(n, \Omega)\|\zeta\|_0.$

Let $\phi$ be the solution of 
\begin{align*}
\begin{cases}
    \Delta \phi=u_1, &\mbox{ in } \Omega,\\
    \quad \phi=0, &\mbox{ on } \partial\Omega,
\end{cases}
\end{align*}
then 
\begin{align*}
    \int_\Omega u_1^2d x&=\int_\Omega u_1\Delta\phi d x=\int_\Omega \phi\Delta u_1 d x=\int_\Omega \phi\Div \zeta d x\\
    &=-\int_\Omega \nabla \phi \cdot \zeta d x\leq\|\nabla\phi\|_{L^2}\|\zeta\|_{L^2}.
\end{align*}
Besides, the maximum principle tells us that
\begin{equation}\label{phi-est}
    \|\phi\|_0\leq C(n, \diam(\Omega))\|u_1\|_0.
\end{equation}
Noting
\begin{align*}
    \int_\Omega |\nabla\phi|^2 d x=-\int_\Omega \phi\Delta\phi d x=-\int_\Omega \phi u_1 d x\leq \|\phi\|_{L^2}\|u_1\|_{L^2},
\end{align*}
we have $$\|\nabla\phi\|_{L^2}\leq \|\phi\|_{L^2}^{\frac{1}{2}}\|u_1\|_{L^2}^{\frac{1}{2}},$$ and further,
\begin{equation*}
    \|u_1\|^2_{L^2}\leq \|\phi\|_{L^2}^{\frac{1}{2}}\|u_1\|_{L^2}^{\frac{1}{2}}\|\zeta\|_{L^2}.
\end{equation*}
Therefore, we get
\begin{equation}\label{u1-est-1}
    \|u_1\|^{\frac{3}{2}}_{L^2}\leq \|\phi\|_{L^2}^{\frac{1}{2}}\|\zeta\|_{L^2}\leq |\Omega|^{\frac{1}{4}}\|\phi\|_0^{\frac{1}{2}}|\Omega|^{\frac{1}{2}}\|\zeta\|_0\leq C(n, \Omega)\|u_1\|^{\frac{1}{2}}_0\|\zeta\|_0.
\end{equation}
Here we apply \eqref{phi-est} in the last inequality.
By Theorem 8.15 in \cite{GT}, we know
$$\|u_1\|_0\leq C(n, \Omega)(\|\zeta\|_0+\|u_1\|_{L^2}),$$
which combined with \eqref{u1-est-1} will imply
\begin{equation}\label{u1-est-2}
\|u_1\|_0\leq C(n, \Omega)(\|\zeta\|_0+\|u_1\|_0^{\frac{1}{3}}\|\zeta\|_0^{\frac{2}{3}}).
\end{equation}
Let $D=\|u_1\|_0$ and $B=\|\zeta\|_0$. We consider the following two cases.

{\it Case 1}: $D^{\frac{1}{3}}B^{\frac{2}{3}}\leq B$. In this case, we have $D\leq B$ and the claim follows.

{\it Case 2}: $D^{\frac{1}{3}}B^{\frac{2}{3}}\geq B$. In this case, by \eqref{u1-est-2} we have
$$D\leq 2C(n, \Omega)D^{\frac{1}{3}}B^{\frac{2}{3}},$$
which also implies $$D\leq C(n, \Omega)B.$$
Then we complete the proof of the claim.
\end{proof}

With Lemma \ref{elliptic} and \eqref{e:v1-estimate},
\begin{align*}
    \|V^\lozenge-V\|_{0}\leq C(\|f\|_0+\|\zeta\|_0)\leq C\|v^\lozenge-v\|_{0}\leq C\delta^{\frac{1}{2}}\lambda^{-\tau},
\end{align*}
and for any $\sigma\in (0, 1)$, by the $C^{1,\sigma}$ estimates ((4.45)-(4.46) in \cite{GT}) , 
\begin{align*}
\|V^\lozenge-V\|_{1+\sigma}\leq C(\|\zeta\|_\sigma+\|f\|_0)\leq& C\|v-v^\lozenge\|_\sigma\leq C\|v-v^\lozenge\|_0^{1-\sigma}\|v-v^\lozenge\|_1^\sigma\\
\leq&C(\delta^{\frac{1}{2}})^\sigma(\delta^{\frac{1}{2}}\lambda^{-\tau})^{1-\sigma}=C\delta^{\frac{1}{2}}\lambda^{-\tau(1-\sigma)}.
\end{align*}
Then,
\begin{align*}
\|\mathcal{E}_{14}\|_0&=\|V-V^\lozenge\|_0\leq C\delta^{\frac{1}{2}}\lambda^{-\tau}\leq C\delta\lambda^{1-\tau},\\
    \|\mathcal{E}_{14}\|_1&=\|V-V^\lozenge\|_1\leq\|V-V^\lozenge\|_{1+\sigma}\leq C\delta^{\frac{1}{2}}\lambda^{-\tau(1-\sigma)}\leq C\delta\lambda.
\end{align*}

 It remains to estimates $\mathcal{E}_{15}$. From \eqref{e:v1-estimate}, for $j=0, 1$, we can derive 
\begin{align*}
    \|\mathcal{E}_{15}\|_j\leq C\|v-v^\lozenge\|_j\leq C\delta^{\frac{1}{2}}\lambda^{\tau j-\tau}\leq\delta\lambda^{1-\tau(1-j)},
\end{align*}
where we have used $\delta^{1/2}\lambda>1.$
Putting the estimates of $\mathcal{E}_{1k}, k=1,\cdots, 5$ together, we obtain
 \begin{equation}\label{eq-error1}
 \|\mathcal{E}_1\|_0\leq C\delta\lambda^{1-\tau},\quad \|\mathcal{E}_1\|_1\leq C\delta\lambda.
\end{equation}

\smallskip

{\it Step 3.Adding the second rank-one matrix.} We also mollify $(v^\lozenge, w^\lozenge)$ to get $(\tilde v^\lozenge, \tilde w^\lozenge)$ at the length-scale $\lambda^{1-2\tau}$. Then by \eqref{eq-vw-natural-c2} and Lemma \ref{holder}, one also has
\begin{equation}\label{eq-vw-natural-tilde}
\begin{split}
&\|\tilde{v}^\lozenge-v^\lozenge\|_1+\|\tilde{w}^\lozenge-w^\lozenge\|_1\leq C\delta^{1/2}\lambda^{1-\tau},\\
&\|\tilde{v}^\lozenge\|_{2+j}+\|\tilde{w}^\lozenge\|_{2+j}\leq C\delta^{1/2}\lambda^{\tau+(2\tau-1)j},
 \end{split}
\end{equation}
for any $j\geq0.$ Set
\begin{align*}
v^\star(x)=&v^\lozenge(x)+\lambda^{1-2\tau}\Gamma_1(a(x), \lambda^{2\tau-1} \Phi_2),\\
w^\star(x)=&w^\lozenge(x)-\lambda^{1-2\tau}\Gamma_1(a(x), \lambda^{2\tau-1}\Phi_2)\nabla\tilde{v}^\lozenge(x)\\
&+\lambda^{1-2\tau}\Gamma_2(a(x),\lambda^{2\tau-1}\Phi_2)\nabla\Phi_2.
\end{align*}
By the definitions of $\Gamma_i, i=1, 2$, we get
$$(v^\star, w^\star)=(v^\lozenge, w^\lozenge) \textit{ on } \overline\Omega\setminus\textit{supp }a.$$
Then \eqref{eq-support} follows from \eqref{eq-vw-natural-1} and
$$\textit{dist}(\textrm{supp }a, \textrm{supp }\rho)=\lambda^{-\tau}.$$
Furthermore, similar to Step 2,  using \eqref{eq-a-c2},\eqref{eq-phi-cj} and \eqref{eq-vw-natural-c2}, one is able to derive
\begin{equation}\label{e:v-flat-j}
\begin{split}
\|v^\star-v^\lozenge\|_0\leq &\lambda^{1-2\tau}\|\Gamma_1(a(\cdot),\lambda^{2\tau-1}\Phi_2(\cdot))\|_0\leq C\delta\lambda^{1-2\tau},\\
\|v^\star-v^\lozenge\|_1\leq &\lambda^{1-2\tau}\|\Gamma_1(a(\cdot),\lambda^{2\tau-1}\Phi_2(\cdot))\|_1\leq C\delta,\\
\|v^\star-v^\lozenge\|_2\leq &\lambda^{1-2\tau}\|\Gamma_1(a(\cdot),\lambda^{2\tau-1}\Phi_2(\cdot))\|_2\leq C\delta\lambda^{2\tau-1}.
\end{split}
\end{equation}
and
\begin{equation}\label{e:w-flat-j}
\begin{split}
\|w^\star-w^\lozenge\|_j\leq &C(j)\lambda^{1-2\tau}(\|\Gamma_1(a(\cdot),\lambda^{2\tau-1}\Phi_2(\cdot))\|_j\|\tilde v^\lozenge\|_1\\
&\quad +\|\Gamma_1(a(\cdot),\lambda^{2\tau-1}\Phi_2(\cdot))\|_0\|\tilde v^\lozenge\|_{j+1}\\
&\quad +\|\Gamma_2(a(\cdot),\lambda^{2\tau-1}\Phi_2(\cdot))\|_j\|\nabla\Phi_2\|_0\\
&\quad +\|\Gamma_2(a(\cdot),\lambda^{2\tau-1}\Phi_2(\cdot))\|_0\|\nabla\Phi_2\|_j)\\
\leq &C(j)\delta^{1/2}\lambda^{(2\tau-1)(j-1)},\,\, j=0,1,2.
\end{split}
\end{equation}
By \eqref{e:v1-estimate}, \eqref{e:w1-estimate}, \eqref{e:v-flat-j} and \eqref{e:w-flat-j}, we have
\begin{align*}
\|v^\star-v\|_0+\|w^\star-w\|_0\leq& \|v^\star-v^\lozenge\|_0+\|v^\lozenge-v\|_0+\|w^\star-w^\lozenge\|_0+\|w^\lozenge-w\|_0\\
\leq &C(\delta^{1/2}\lambda^{-\tau}+\delta^{1/2}\lambda^{1-2\tau})\leq C\delta^{1/2}\lambda^{-\tau},\\
\|v^\star-v\|_1+\|w^\star-w\|_1\leq&C\delta^{1/2},
\end{align*}
which imply \eqref{eq-vw-diff-0} and \eqref{eq-vw-diff-1} respectively. Besides, from
\begin{align*}
\|v^\star-v\|_2+\|w^\star-w\|_2\leq& \|v^\star-v^\lozenge\|_2+\|v^\lozenge-v\|_2+\|w^\star-w^\lozenge\|_2+\|w^\lozenge-w\|_2\\
\leq& C(\delta^{1/2}\lambda^\tau+\lambda^{2\tau-1})\leq C\delta^{1/2}\lambda^{2\tau-1},
\end{align*}
 and \eqref{eq-vw-c2} we also get \eqref{eq-vw-diff-2} since $\tau>1$. 
 
Finally, we shall control the second deficit. A direct calculation implies
\begin{align*}\mathcal{E}_2:&=D(v^\star, w^\star)
-\left[D(v^\lozenge, w^\lozenge)+a^2\nabla\Phi_2\otimes\nabla\Phi_2\right]=\sum_{k=1}^5\mathcal{E}_{2k}
\end{align*}
with
\begin{align*}
\mathcal{E}_{21}:=&\lambda^{1-2\tau}\left[\partial_t\Gamma_1\partial_s\Gamma_1\textrm{sym}(\nabla\Phi_2\otimes\nabla a)-\Gamma_1\nabla^2\tilde{v}^\lozenge+\partial_s\Gamma_2\textrm{sym}(\nabla\Phi_1\otimes\nabla a)\right],\\
\mathcal{E}_{22}:=&{\lambda}^{-\tau}\partial_s\Gamma_1\textrm{sym}\left[(\nabla v^\lozenge-\nabla\tilde{v}^\lozenge)\otimes\nabla a\right]+\partial_t\Gamma_1\textrm{sym}\left[(\nabla v^\lozenge-\nabla\tilde{v}^\lozenge)\otimes\nabla\Phi_1\right],\\
\mathcal{E}_{23}:=&\tfrac12\lambda^{-2\tau}(\partial_s\Gamma_1)^2\nabla a\otimes\nabla a,\\
\mathcal{E}_{24}:=&(V^\star-V^\lozenge)\cdot\mathrm{Id},\,\, (\text{here }\Delta V^\star=F(v^\star))\\
\mathcal{E}_{25}:=&\left[(v^\star-v^\lozenge)\cot\Theta\right]\cdot\mathrm{Id}.
\end{align*}
Repeating the estimating procedure in Step 2, we are able to get
\begin{equation}\label{eq-error2}
\|\mathcal{E}_2\|_0\leq C\delta\lambda^{\tau-(2\tau-1)}=C\delta\lambda^{1-\tau},\quad
\|\mathcal{E}_2\|_1\leq C\delta\lambda^{\tau}.
\end{equation}
In fact, when  bounding the $C^1$ norm of the second matrix error, we only need to keep in mind that $\lambda^{2\tau-1}$(and $\lambda^\tau$) plays the same role as $\lambda^\tau$(and $\lambda$ repsectively) in Step 2. Therefore,  the final matrix error can be calculated as follows:
\begin{align*}
\mathcal{E}&=D(v^\star,w^\star)-(D(v,w)+\rho^2(\textrm{Id}+H))\\
&=D(v^\star,w^\star)-(D(v^\lozenge,w^\lozenge)+a^2\nabla\Phi_2\otimes\nabla\Phi_2)\\
&\qquad +D(v^\lozenge,w^\lozenge)-(D(v,w)+a^2\nabla\Phi_1\otimes\nabla\Phi_1)+\tilde{h}-h\\
&=\mathcal{E}_1+\mathcal{E}_2+\tilde{h}-h.
\end{align*}
After collecting the estimates of $\tilde{h}-h$ in Step 1, \eqref{eq-error1} and \eqref{eq-error2}, we can achieve \eqref{eq-deficit-c01} and complete the proof.

\end{proof}

\section{Proof of Theorem \ref{infity-sol} (2)}\label{pfmain}
As discussed at the beginning of Section \ref{s-stage},  we show Theorem \ref{infity-sol} (2) by seeking infinitely many symmetric $2\times2$ matrixes $A$'s and $C^{1,\alpha}(\alpha<\frac15)$ functions $v$'s such that
\[\begin{cases} -\mbox{curl curl }A=1,\\ A=D(v, w)=\sym \nabla w+\frac{1}{2}\nabla v\otimes \nabla v-(v\cot\Theta)\cdot\mathrm{Id}+V\cdot\mathrm{Id}, \end{cases}\]
where $V$ satisfies the following Dirichlet problem:
 \begin{equation}\label{bigV}
\left\{
\begin{aligned}
  \Delta V&=F(v)=2\nabla v\cdot\nabla \cot\Theta+v\Delta\cot\Theta, \mbox{ in } \Omega,\\ 
  V&=0, \mbox{ on } \partial\Omega,
\end{aligned}
\right.
\end{equation}
We will solve the first system about $A$ together with a subsolution by seeking solutions to some Poisson equations and the second system by constructing a sequence of adapted subsolutions via convex integration.

\begin{proof}[Proof of Theorem \ref{infity-sol} (2)]
We separate the proof into four steps.

{\it Step 1. Matrix and initial subsolutions.}  We construct the matrix $A$ and initial subsolutions at the same time. 
Let $u$ be the solution to the following Dirichlet problem,
\begin{equation*}
\left\{
\begin{aligned}
        \Delta u &=0, \mbox{ in } \Omega,\\
        u&=g, \mbox{ on } \partial\Omega,
\end{aligned}
    \right.
\end{equation*}
and $\psi$ be the solution of the following Dirichlet problem,
\begin{equation*}
\left\{
\begin{aligned}
        -\Delta \psi &=1-\det\nabla^2 u, \mbox{ in } \Omega,\\
        \psi&=0, \mbox{ on } \partial\Omega.
\end{aligned}
    \right.
\end{equation*}
Define 
$$A=\psi\mathrm{Id}+\frac{1}{2}\nabla u\otimes\nabla u+(-u\cot\Theta+U)\mathrm{Id},$$
where $U$ is the solution to 
\begin{equation*}
\left\{
\begin{aligned}
        \Delta U &=F(u):=2\nabla u\cdot\nabla \cot\Theta+u\Delta\cot\Theta, \mbox{ in } \Omega,\\
        U&=0, \mbox{ on } \partial\Omega.
\end{aligned}
    \right.
\end{equation*}

Note that $|\sin\Theta|\geq c_2>0$ as in the assumptions of Theorem \ref{infity-sol}. So $\cot\Theta$ is a $C^{2,\kappa}$ bounded function. 
The existence and regularity of $u\in C^{2,\kappa}$, $\psi\in C^{2,\kappa}$ and $U\in C^{2,\kappa}$ are guaranteed by the elliptic theory. 

Obviously, $A\in C^{1, \kappa}(\overline{\Omega})$. Besides,
\begin{align*}
    -\curl\, \curl\, A&=-\Delta \psi +\det\nabla^2 u-\Delta(-u\cot\Theta+U)\\
    &=1+\Delta(u\cot\Theta)-F(u)\\&=1+\cot\Theta\Delta u=1.
\end{align*}
With $\Delta u=0$, we know $1-\det\nabla^2u=1+(\partial^2_{11}u)^2+(\partial^2_{12}u)^2>0$, and hence by the strong maximum principle, we have $$\psi(x)>0, \mbox{ for any } x\in \Omega.$$
Set the initial subsolution 
\[(v_0, w_0)=(u, 0).\]
Then, $$A-D(v_0,w_0)=A-D(u, 0)=\psi\mathrm{Id}:=\rho_0^2(\mathrm{Id}+H_0),$$
where $\rho_0=\sqrt{\psi}$ and $H_0$ is the zero matrix. Moreover, $v_0|_{\partial\Omega}=u|_{\partial\Omega}=g$.  

Note that $(v_0, w_0)$ and $A$ are not unique. For example, taking $v_0$ as before and $w_0\in\R^2$ as an arbitrary constant vector will also work.

\smallskip

{\it Step 2. Iteration. } 
We define the amplitude sequence $\delta_q$ and the frequency sequence $\lambda_q$ inductively by
$$\delta_1=\max_{x\in\overline\Omega}\rho_0^2,\quad \lambda_{q+1}=\lambda_q^b,\quad \lambda_q=M\delta_q^{-\frac1{2\beta}},$$
where $\beta$ is a arbitrary fixed constant in $(0,\frac15)$, and  
\begin{equation}\label{eq-b}
b=1+\frac{9\sigma}{1-5\beta} \quad (\text{then }5\beta(b-1)+9\sigma=b-1), \end{equation}
and $M$ is a large constant to be fixed. We then decomposes $\overline\Omega$ according to the level sets of $\rho_0$,
$$
\Omega_q=\left\{x\in\overline\Omega:\,\rho_0(x)\geq2\delta_{q+2}^{1/2}\right\},\,\,
\tilde\Omega_q=\left\{x\in\overline\Omega:\,\rho_0(x)\geq\frac32\delta_{q+2}^{1/2}\right\}
$$
for any $q\in\mathbb{N}$. 
With Proposition \ref{p:stage}, for any $q\in\N$, we inductively construct $(v_q, w_q, \rho_q, H_q)$ such that the following hold:
\begin{itemize}
\item[$(1)_q$] For any $x\in\overline\Omega$, it holds that
$D_q=:A-D(v_q, w_q)=\rho_q^2(\textrm{Id}+H_q).$
\item[$(2)_q$] If $x\in\overline\Omega\setminus\tilde\Omega_q$, then $(v_q, w_q, \rho_q, H_q)(x)=(v_0, w_0, \rho_0, H_0)(x).$
\item[$(3)_q$] The following estimates hold in $\overline\Omega$:

\begin{align*}
\|\rho_q\|_0\leq2\delta_{q+1}^{1/2}, \quad \|H_q\|_0\leq4\lambda_{q+1}^{-\frac\alpha b}.
\end{align*}
\item[$(4)_q$] If $x\in\tilde\Omega_j$ for some $j\geq q,$ we get the following estimates:
\begin{align*}
&\rho_q\geq\frac32\delta_{j+2}^{1/2},\quad|\nabla\rho_q|\leq \delta_{j+1}^{1/2}\lambda_{j+2},\quad
\frac{|\nabla\rho_q|}{\rho_q}\leq\lambda_{j+2},\\
&|\nabla^2v_q|+|\nabla^2 w_q|\leq \delta_{j+1}^{1/2}\lambda_{j+2}, \quad
|\nabla H_q|\leq \lambda_{j+2}^{1-\frac\alpha b}.
\end{align*}
\item[$(5)_q$]  For any $x\in\overline\Omega,$ $q\geq1,$
\begin{equation}\label{e:vqdifference}
\|v_q-v_{q-1}\|_j+\|w_q-w_{q-1}\|_j\leq \overline C\delta_q^{1/2}\lambda_q^{j-1}, j=0, 1,
\end{equation}
where $\overline C$ is the constant in the conclusion of Proposition \ref{p:stage}.
\end{itemize}

Indeed, $(v_0, w_0, \rho_0, H_0)$ has been constructed in Step 1 and it is not hard to verify $(1)_0-(4)_0$. Similar to Step 2 of the proof of Proposition 2.1 in \cite{Cao}, by using Proposition \ref{p:stage} we are able to inductively get $(v_q, w_q, \rho_q, H_q)$ for $q\geq1$. Since the proof is exactly the same as in \cite{Cao}, we omit the details.

\smallskip

{\it Step 3. Taking limits.} 
By $(3)_q$ and $(5)_q$, we know $(\rho_q, H_q)$ is a Cauchy sequence in $C^0(\overline{\Omega})$, and $(v_q, w_q)$ is a Cauchy sequence in $C^1(\overline\Omega)$, respectively. Hence we can assume
$$(v_q, w_q, \rho_q, H_q)\rightarrow(\bar{v},\bar w, 0, 0)  \text{ uniformly in } \overline\Omega,$$ 
for some functions $\bar v$ and $\bar w$. 

Moreover, for any $\alpha\in (0, \frac{1}{5})$ and $\varepsilon>0$, we can choose $\beta\in (\alpha, \frac{1}{5})$ in Step 2. By $(4)_q-(5)_q$, $(4)_{q+1}$ and interpolation, we have for any $\alpha\in (0, \frac{\beta}{b^2})$,
$$\|v_q-v_{q+1}\|_{1+\alpha}+\|w_q-w_{q+1}\|_{1+\alpha}\leq 4\delta_{q+1}^{(1-\alpha)/2}\delta_{q+2}^{\alpha/2}\lambda_{q+3}^{\alpha}\leq M^\beta\lambda_{q+1}^{\alpha b^2-\beta},$$
which implies $(v_q, w_q)$ uniformly converges to $(\bar v, \bar w)$ in $C^{1, \alpha}(\overline{\Omega})$. Since $\alpha$ and $b$ can be taken arbitrarily close to $0$ and $1$ by \eqref{eq-b}, respectively, $(\bar{v}, \bar w)\in C^{1, \alpha}({\Omega}),$ for any $\alpha<\beta$. From $(5)_q$, for any $\epsilon>0,$ we also have
\begin{align*}
\|\bar{v}-v_0\|_0+\|\bar w-w_0\|_0\leq&\sum_{q=0}^{\infty}\|v_q-v_{q+1}\|_0+\|w_{q+1}-w_q\|_0\\
\leq&\sum_{q=0}^\infty \overline C\delta_{q+1}^{1/2}\lambda_{q+1}^{-1}
\leq \overline CM^{-1}\leq\epsilon,
\end{align*}
if $M$ is taken larger.

{\it Step 4. Very weak solutions.} We will show $\bar v$ is a very weak solution to \eqref{eq-dirich-lag}. On the one hand, by construction, it is easy to see $(\bar v, \bar w)|_{\partial\Omega}=(v_0, w_0)|_{\partial\Omega}$. On the other hand, $\rho_q\to0$ implies $A-D(\bar v, \bar w)=0.$ Thus for any $\phi\in C^\infty_0(\Omega)$, by integration by parts and $\mbox{curl curl }A=-1$, we have
\begin{align*}
  -\int_\Omega \phi\sin\Theta d x=&\int_\Omega \mathrm{Trace} (H_{\phi\sin\Theta}\cdot A) d x \\=&\int_\Omega \mathrm{Trace} (H_{\phi\sin\Theta}\cdot \frac{1}{2}\nabla \bar v\otimes \nabla \bar v) d x-\int_\Omega \bar v\Delta(\phi\cos\Theta) d x,
\end{align*}
where we have used $\mbox{curl curl}(\sym\nabla \bar w)=0$ and the notation 
\[
  H_f=\left (\begin{array}{rr}
    \partial_{22}^2f & -\partial_{12}^2f   \\
    -\partial_{12}^2f & \partial_{11}^2f   \\
    \end{array}\right) \text{ for any $C^2$ function }f.
\]
Then it implies
\begin{align*}
0=\int_\Omega\partial_1 \bar v\partial_2\bar v\partial_{12}^2(\phi\sin\Theta)&-\frac{1}{2}(\partial_1\bar v)^2\partial_{22}^2(\phi\sin\Theta)-\frac{1}{2}(\partial_2\bar v)^2\partial_{11}^2(\phi\sin\Theta) d x\\&+\int_\Omega \bar v\Delta (\phi \cos\Theta)d x-\int_\Omega \phi\sin\Theta d x.
\end{align*}
Finally, since $\epsilon$ can be arbitrary and the initial subsolution is also not unique (note each $(v_q, w_q)$ can be an initial subsolution), \eqref{eq-dirich-lag} admits infinitely many very weak solutions $\bar v$'s. We then complete the proof.
\end{proof}

\appendix
\section{Useful lemmas and proofs of Lemmas \ref{Gamma-estimate}-\ref{Error-estimate}}

In this appendix, we provide the sketch proofs of Lemmas \ref{Gamma-estimate}-\ref{Error-estimate} for completeness, which are as same as those in \cite{Cao}. We begin with the following two useful lemmas.
\begin{lemma}[\cite{CDS12}]\label{holder} 
For any $r, s\geq 0$ and $0<\alpha\leq 1$, we have
\begin{align*}
    &[h*\phi_l]_{r+s}\leq Cl^{-s}[h]_r;\\
    &\|h-h*\phi_l\|_r\leq Cl^{1-r}[h]_1, \mbox{ for } 0\leq r\leq 1;\\
    &\|h-h*\phi_l\|_j\leq Cl^{2-j}\|h\|_2, \mbox{ for } j=0, 1;\\
    &\|(h_1h_2)*\phi_l-(h_1*\phi_l)(h_2*\phi_l)\|_r\leq Cl^{2\alpha-r}\|h_1\|_\alpha\|h_2\|_\alpha,
\end{align*}
with the constant $C$ depending only on $s, r, \alpha, \phi$.
\end{lemma}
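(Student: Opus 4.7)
The plan is to establish the four estimates in turn using three classical devices: derivatives transfer onto the mollifier with a scaling loss, differences $h - h\ast\phi_l$ admit Taylor-type integral representations, and products give rise to a commutator via a standard algebraic identity. Throughout I write $\phi_l(z) = l^{-2}\phi(z/l)$, so $\operatorname{supp}\phi_l \subset B_l$ and $\|\partial^\beta \phi_l\|_{L^1} \leq C l^{-|\beta|}$.

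For the smoothing bound $[h\ast\phi_l]_{r+s}\leq C l^{-s}[h]_r$, I transfer integer derivatives onto $\phi_l$ via $\partial^\beta(h\ast\phi_l) = h\ast\partial^\beta\phi_l$ and use $\int \partial^\beta\phi_l\,dz = 0$ for $|\beta|\geq 1$ to write
\begin{equation*}
\partial^\beta(h\ast\phi_l)(x) = \int [h(x-z) - h(x)]\,\partial^\beta \phi_l(z)\,dz.
\end{equation*}
Combining $|h(x-z) - h(x)|\leq [h]_r |z|^r$ with $|\partial^\beta\phi_l|\leq Cl^{-2-|\beta|}$ integrated over $B_l$ yields the factor $l^{r-|\beta|}$; fractional orders are handled by applying the same argument to difference quotients. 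The two approximation estimates then follow by representing $(h - h\ast\phi_l)(x) = \int[h(x) - h(x-z)]\phi_l(z)dz$, which gives $Cl[h]_1$ directly; when $h\in C^2$ and $\phi$ is radial, $\int z\,\phi_l(z)dz = 0$ eliminates the first-order term in the Taylor expansion of $h(x-z)$, improving the bound to $Cl^2\|h\|_2$. The $C^r$ (respectively $C^1$) refinements come either from interpolation with the smoothing estimate applied to $\nabla(h - h\ast\phi_l)$, or by differentiating under the integral and repeating the Taylor argument.

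The only genuinely nontrivial step is the commutator estimate. The key algebraic identity, which is verified by expanding $h_i(x-z) = [h_i(x-z) - h_i(x)] + h_i(x)$ and comparing, reads
\begin{equation*}
\bigl[(h_1 h_2)\ast\phi_l - (h_1\ast\phi_l)(h_2\ast\phi_l)\bigr](x) = \int\!\bigl[h_1(x{-}z){-}h_1(x)\bigr]\bigl[h_2(x{-}z){-}h_2(x)\bigr]\phi_l(z)\,dz - \bigl[(h_1\ast\phi_l){-}h_1\bigr](x)\bigl[(h_2\ast\phi_l){-}h_2\bigr](x).
\end{equation*}
Since each bracketed factor is bounded in $C^0$ by $C l^\alpha \|h_i\|_\alpha$ by $\alpha$-Hölder continuity over $B_l$, the $C^0$ estimate with the factor $l^{2\alpha}$ is immediate. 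To upgrade to the $C^r$ norm, I would distribute up to $r$ derivatives across the identity: derivatives landing on $\phi_l$ give the scaling $l^{-r}$ via the rescaling of the mollifier, while derivatives landing on the Hölder-type differences produce the same net scaling by the first smoothing estimate applied with Hölder exponent $\alpha$. The main obstacle is the bookkeeping required to verify that every term produced by the Leibniz rule on this identity obeys the uniform scaling $l^{2\alpha - r}$; once this is in place, all four claims follow.
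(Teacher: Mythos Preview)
The paper does not prove this lemma at all: it is stated in the appendix with a citation to \cite{CDS12} and then used as a black box throughout Section~\ref{s-stage}. Your sketch is essentially the standard argument one finds in that reference (and in the convex-integration literature more broadly): transfer derivatives to the mollifier for the smoothing estimate, use the integral representation $(h-h*\phi_l)(x)=\int[h(x)-h(x-z)]\phi_l(z)\,dz$ together with Taylor expansion for the approximation bounds, and exploit the algebraic identity you wrote for the commutator. The identity is correct, and the $C^0$ bound with factor $l^{2\alpha}$ follows exactly as you say; the passage to $C^r$ by distributing derivatives via Leibniz and rescaling is the routine (if somewhat tedious) bookkeeping you flag. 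One small remark: for the third estimate with $j=0$ you rely on $\int z\,\phi_l(z)\,dz=0$, which the paper secures by taking $\varphi$ to be a ``standard'' (hence symmetric) mollifier, so that assumption is legitimate here.
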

%The other one is about conformal mapping.
\begin{lemma}[\cite{Cao}]\label{diag}
  Let $\Sigma\subset\R^2$ be a simply connected open bounded set with a smooth boundary and $H:\Sigma\to \R^{2\times 2}$ a smooth $2\times2$ symmetric positive definite matrix-valued function, such that for some $0<\alpha<1$, and $c, M\geq1$,
  $$c^{-1}\mathrm{Id}\leq H\leq c\mathrm{Id}, \,\, \|H\|_{C^\alpha(\Sigma)}\leq M.$$
 Then there exist a smooth diffeomorphism $\Phi:\Sigma\to \R^2$ and a smooth positive function $\vartheta:\Sigma\to \R$ satisfying $$H=\vartheta^2(\nabla \Phi_1\otimes\nabla\Phi_1+\nabla \Phi_2\otimes\nabla\Phi_2).$$
 Moreover the following estimates hold:
\begin{align*}
  &\det(\nabla \Phi(x))\geq c_0,\,\, \vartheta(x)\geq c_0, \mbox{ for all } x\in\Sigma, \\&\|\vartheta\|_{j+\alpha}+\|\nabla\Phi\|_{j+\alpha}\leq C(j)\|H\|_{j+\alpha},\,\, j\in \N,
\end{align*}
where the constants $c_0>0$, $C(j)\geq 1$ depend only on $\alpha, c, M$ and on $\Sigma$.
\end{lemma}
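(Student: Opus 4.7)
\textbf{Proof plan for Lemma \ref{diag}.} The requested identity can be rewritten as $H=\vartheta^{2}(\nabla\Phi)^T(\nabla\Phi)$, where $\nabla\Phi$ is the Jacobian of $\Phi=(\Phi_1,\Phi_2)$. This is exactly the statement that $\Phi$ is a conformal diffeomorphism from the Riemannian surface $(\Sigma,H)$ onto its image in $(\R^2,\delta)$ with conformal factor $\vartheta$; equivalently, we are constructing global isothermal coordinates for the metric $H$ on the simply connected domain $\Sigma$. The plan is to reduce the problem to a uniformly elliptic Beltrami equation, solve it globally on $\Sigma$, and then read off $\Phi$ and $\vartheta$.

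I would identify $\R^2\cong\C$ via $z=x_1+ix_2$, write $H=\begin{pmatrix}E&F\\F&G\end{pmatrix}$, and introduce the Beltrami coefficient
$$\mu=\frac{E-G+2iF}{E+G+2\sqrt{\det H}},$$
which is chosen so that a map $w:\Sigma\to\C$ is conformal from $(\Sigma,H)$ to $(\R^2,\delta)$ if and only if it satisfies $w_{\bar z}=\mu w_z$. The pinching $c^{-1}\mathrm{Id}\leq H\leq c\,\mathrm{Id}$ yields $\det H\geq c^{-2}$ and a short algebraic check gives $\|\mu\|_0\leq k(c)<1$ uniformly on $\Sigma$, together with $\|\mu\|_{j+\alpha}\leq C\|H\|_{j+\alpha}$. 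Because $\Sigma$ is simply connected with smooth boundary, the Ahlfors-Bers existence theory for the Beltrami equation furnishes a diffeomorphism $w=\Phi_1+i\Phi_2:\Sigma\to w(\Sigma)\subset\C$ satisfying $w_{\bar z}=\mu w_z$ with nowhere vanishing $w_z$. I would then set
$$\vartheta^{2}=\frac{E+G+2\sqrt{\det H}}{2|w_z|^2},$$
at which point the relations $w_x=w_z+w_{\bar z}$, $w_y=i(w_z-w_{\bar z})$ combined with the Beltrami equation yield $H=\vartheta^{2}(\nabla\Phi)^T(\nabla\Phi)$ by direct computation, while positivity of $\vartheta$ and the lower bound $\det\nabla\Phi=(1-|\mu|^2)|w_z|^2\geq c_0$ are automatic from $|\mu|\leq k<1$ together with a suitable normalization of $w$ (for instance fixing $w_z$ at one interior point).

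The main obstacle is the quantitative Schauder estimate $\|\vartheta\|_{j+\alpha}+\|\nabla\Phi\|_{j+\alpha}\leq C(j)\|H\|_{j+\alpha}$. My plan is to bootstrap from the H\"older theory of the Beltrami equation: reformulate $w_{\bar z}=\mu w_z$ as a fixed-point equation of the shape $w_z=c_0+T(\mu w_z)$, where $T$ denotes the Beurling-Ahlfors singular integral, use boundedness of $T$ on $C^\alpha$ together with the contraction afforded by $|\mu|\leq k<1$ (after covering $\Sigma$ by subdomains on which the H\"older oscillation of $\mu$ is small) to close the argument in $C^\alpha$, and then gain one order of regularity at each step by differentiating $w_{\bar z}=\mu w_z$ and reapplying the same estimate to $\partial^\beta w$ with a new inhomogeneity depending on $\partial^{\leq j-1}H$. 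The simple-connectedness hypothesis enters precisely at the global existence step, ruling out nontrivial periods when gluing local Beltrami solutions into a single-valued diffeomorphism on all of $\Sigma$; the formula $\vartheta^{2}=(E+G+2\sqrt{\det H})/(2|w_z|^2)$ together with the already-established bounds on $w_z$ then yields the claimed estimates for $\vartheta$ as well.
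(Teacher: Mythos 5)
The paper cites this lemma from \cite{Cao} rather than proving it here, but your plan---constructing isothermal coordinates for the metric $H$ by solving a Beltrami equation, with the pinching $c^{-1}\mathrm{Id}\le H\le c\,\mathrm{Id}$ giving $\|\mu\|_0\le (c-1)/(c+1)<1$, and bootstrapping the Schauder estimates by differentiating $w_{\bar z}=\mu w_z$ and interpolating against $\|H\|_\alpha\le M$---is the standard route to this conformal-flattening statement and matches the cited reference in substance. One small arithmetic slip: from $ds^2_H=\lambda\,|dz+\mu\,d\bar z|^2$ with $\lambda=\tfrac14\bigl(E+G+2\sqrt{\det H}\bigr)$ and $|dw|^2=|w_z|^2\,|dz+\mu\,d\bar z|^2$, the conformal factor should be $\vartheta^2=\dfrac{E+G+2\sqrt{\det H}}{4\,|w_z|^2}$ rather than with a $2$ in the denominator; this does not affect the argument.
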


\begin{proof}[\bf A sketch proof of Lemma \ref{Gamma-estimate}] 
Recall the definition of $\Gamma_i: \R\times\R\to\R$, $i=1, 2$:
\[
\Gamma_1(s, t)=\frac{s}{\pi}\sin(2\pi t),\quad \Gamma_2(s, t)=-\frac{s^2}{4\pi}\sin(4\pi t).\]
For any nonnegative integer $k$, it is not hard to show the following estimates: 
\begin{equation}\label{eq-gamma-st}
  \begin{split}
    |\partial_t^k\Gamma_1(s, t)|+|\partial_s\partial^k_t\Gamma_2(s, t)|&\leq C|s|,\\
    |\partial_s\partial_t^k\Gamma_1(s, t)|&\leq C,\\
    |\partial_t^k\Gamma_2(s, t)|&\leq Cs^2.
  \end{split}
\end{equation}
The derivatives of $\Gamma_i$, $i=1, 2$ could be calculated directly by the chain rule, then the desired estimates directly follow from the assumptions on $\rho, H$, Lemma \ref{diag} and \eqref{eq-gamma-st}. %we have
\end{proof}

\begin{proof}[\bf A sketch proof of Lemma \ref{Error-estimate}]
By \eqref{eq-phi-cj}, \eqref{eq-a-c2}, \eqref{e:tilde-vw-C1}, \eqref{e:tilde-vw-C2}, \eqref{e:step-gamma1-cjnorm} and \eqref{e:step-gamma2-cjnorm}, we get
\begin{align*}
\|\mathcal{E}_{11}\|_0\leq&\lambda^{-\tau}(\|\partial_t\Gamma_1\|_0\|\partial_s\Gamma_1\|_0\|\nabla a\|_0\|\nabla\Phi_1\|_0+\|\Gamma_1\|_0\|\nabla^2\tilde{v}\|_0\\
&\quad+\|\partial_s\Gamma_2\|_0\|\nabla a\|_0\|\nabla\Phi_1\|_0)
\leq C\delta\lambda^{1-\tau},
\end{align*}
and
\begin{align*}
 \|\mathcal{E}_{12}\|_0\leq&\lambda^{-\tau}\|\partial_s\Gamma_1\|_0\|\nabla v-\nabla\tilde{v}\|_0\|\nabla a\|_0+\|\partial_t\Gamma_1\|_0\|\nabla v-\nabla\tilde{v}\|_0\|\nabla\Phi_1\|_0\\
 \leq& C(\delta\lambda^{2-2\tau}+\delta\lambda^{1-\tau})
 \leq C\delta\lambda^{1-\tau},
 \end{align*}
and
\begin{align*}
\|\mathcal{E}_{13}\|_0\leq&\tfrac12\lambda^{-2\tau}\|\partial_s\Gamma_1\|_0^2\|\nabla a\|_0^2\leq C\delta\lambda^{2-2\tau}.
 \end{align*}
 The gradients of $\E_{11}$, $\E_{12}$ and $\E_{13}$ could be calculated directly by the chain rule, then the corresponding estimates can be got similarly.
\end{proof}
\bibliography{bib}

@article{DGG,
author = {D'Onofrio, Luigi and Giannetti, Flavia and Greco, Luigi},
journal = {Atti della Accademia Nazionale dei Lincei. Classe di Scienze Fisiche, Matematiche e Naturali. Rendiconti Lincei. Matematica e Applicazioni},
month = {9},
number = {3},
pages = {159-169},
publisher = {Accademia Nazionale dei Lincei},
title = {On weak Hessian determinants},
url = {http://eudml.org/doc/252374},
volume = {16},
year = {2005},
}

@article {Bha21,
    AUTHOR = {Bhattacharya, Arunima},
     TITLE = {Hessian estimates for {L}agrangian mean curvature equation},
   JOURNAL = {Calc. Var. Partial Differential Equations},
  FJOURNAL = {Calculus of Variations and Partial Differential Equations},
    VOLUME = {60},
      YEAR = {2021},
    NUMBER = {6},
     PAGES = {Paper No. 224, 23},
      ISSN = {0944-2669},
   MRCLASS = {35J60 (35B45 53D12)},
  MRNUMBER = {4314140},
       DOI = {10.1007/s00526-021-02097-0},
       URL = {https://doi.org/10.1007/s00526-021-02097-0},
}

@incollection {Iwan,
    AUTHOR = {Iwaniec, Tadeusz},
     TITLE = {On the concept of the weak {J}acobian and {H}essian},
 BOOKTITLE = {Papers on analysis},
    SERIES = {Rep. Univ. Jyv\"{a}skyl\"{a} Dep. Math. Stat.},
    VOLUME = {83},
     PAGES = {181--205},
 PUBLISHER = {Univ. Jyv\"{a}skyl\"{a}, Jyv\"{a}skyl\"{a}},
      YEAR = {2001},
   MRCLASS = {30C65 (30G20 30G30 35J60)},
  MRNUMBER = {1886622},
MRREVIEWER = {Piotr Haj\l asz},
}

@article {HL,
    AUTHOR = {Harvey, Reese and Lawson, Jr., H. Blaine},
     TITLE = {Calibrated geometries},
   JOURNAL = {Acta Math.},
  FJOURNAL = {Acta Mathematica},
    VOLUME = {148},
      YEAR = {1982},
     PAGES = {47--157},
      ISSN = {0001-5962},
   MRCLASS = {53C40 (49F20 53C65 58E15 58G30)},
  MRNUMBER = {666108},
       DOI = {10.1007/BF02392726},
       URL = {https://doi.org/10.1007/BF02392726},
}

@article {CSze,
    AUTHOR = {Cao, Wentao and Sz\'{e}kelyhidi Jr., L\'{a}szl\'{o}},
     TITLE = {Very weak solutions to the two-dimensional {M}onge-{A}mp\'{e}re
              equation},
   JOURNAL = {Sci. China Math.},
  FJOURNAL = {Science China. Mathematics},
    VOLUME = {62},
      YEAR = {2019},
    NUMBER = {6},
     PAGES = {1041--1056},
      ISSN = {1674-7283},
   MRCLASS = {35J96 (35D30)},
  MRNUMBER = {3951880},
MRREVIEWER = {Wei Zhang},
       DOI = {10.1007/s11425-018-9516-7},
       URL = {https://doi.org/10.1007/s11425-018-9516-7},
}

@article {CHI,
    AUTHOR = {Cao, Wentao and Hirsch, Jonas and Inauen, Dominik},
     TITLE = {$C^{1,\frac13-}$ Very weak solutions to the two-dimensional {M}onge-{A}mp\'{e}re
              equation},
   JOURNAL = {ArXiv:2310.06693},
      YEAR = {2023},
   MRCLASS = {35J96},
       URL = {https://arxiv.org/abs/2310.06693},
}

@article {LQ,
    AUTHOR = {Li, Tongtong and Qiu, Guohuan},
     TITLE = {Very weak solutions of the Dirichlet problem for 2-Hessian equation},
   JOURNAL = {arXiv:2403.09356},
      YEAR = {2024},
   MRCLASS = {35J96},
       URL = {https://arxiv.org/abs/2403.09356},
}

@article {Sze,
    AUTHOR = {Sz\'{e}kelyhidi Jr., L\'{a}szl\'{o}},
     TITLE = {From isometric embeddings to turbulence},
   JOURNAL = {In: HCDTE lecture notes. Part II. Nonlinear hyperbolic PDEs, dispersive and transport equations. Springfield : American Institute of Mathematical Sciences},
      YEAR = {2013},
     PAGES = {195--255},
}

@article {Cao,
    AUTHOR = {Cao, Wentao},
     TITLE = {Very weak solutions of the {D}irichlet problem for the two
              dimensional {M}onge-{A}mp\`ere equation},
   JOURNAL = {J. Funct. Anal.},
  FJOURNAL = {Journal of Functional Analysis},
    VOLUME = {284},
      YEAR = {2023},
    NUMBER = {8},
     PAGES = {Paper No. 109868, 25},
      ISSN = {0022-1236},
   MRCLASS = {35M10 (35J96 76B03 76F02)},
  MRNUMBER = {4544094},
       DOI = {10.1016/j.jfa.2023.109868},
       URL = {https://doi.org/10.1016/j.jfa.2023.109868},


}

@book {GT,
    AUTHOR = {Gilbarg, David and Trudinger, Neil S.},
     TITLE = {Elliptic partial differential equations of second order},
    SERIES = {Classics in Mathematics},
      NOTE = {Reprint of the 1998 edition},
 PUBLISHER = {Springer-Verlag, Berlin},
      YEAR = {2001},
     PAGES = {xiv+517},
      ISBN = {3-540-41160-7},
   MRCLASS = {35-02 (35Jxx)},
  MRNUMBER = {1814364},
}

@article {Yuan06,
    AUTHOR = {Yuan, Yu},
     TITLE = {Global solutions to special {L}agrangian equations},
   JOURNAL = {Proc. Amer. Math. Soc.},
  FJOURNAL = {Proceedings of the American Mathematical Society},
    VOLUME = {134},
      YEAR = {2006},
    NUMBER = {5},
     PAGES = {1355--1358},
      ISSN = {0002-9939},
   MRCLASS = {35J60},
  MRNUMBER = {2199179},
MRREVIEWER = {Jiabao Su},
       DOI = {10.1090/S0002-9939-05-08081-0},
       URL = {https://doi.org/10.1090/S0002-9939-05-08081-0},
}

@article {Heinz,
    AUTHOR = {Heinz, Erhard},
     TITLE = {On elliptic {M}onge-{A}mp\`ere equations and {W}eyl's embedding
              problem},
   JOURNAL = {J. Analyse Math.},
  FJOURNAL = {Journal d'Analyse Math\'{e}matique},
    VOLUME = {7},
      YEAR = {1959},
     PAGES = {1--52},
      ISSN = {0021-7670},
   MRCLASS = {35.00 (53.00)},
  MRNUMBER = {111943},
MRREVIEWER = {A. V. Pogorelov},
       DOI = {10.1007/BF02787679},
       URL = {https://doi.org/10.1007/BF02787679},
}

@article {WY10,
    AUTHOR = {Warren, Micah and Yuan, Yu},
     TITLE = {Hessian and gradient estimates for three dimensional special
              {L}agrangian equations with large phase},
   JOURNAL = {Amer. J. Math.},
  FJOURNAL = {American Journal of Mathematics},
    VOLUME = {132},
      YEAR = {2010},
    NUMBER = {3},
     PAGES = {751--770},
      ISSN = {0002-9327},
   MRCLASS = {49Q05 (58E12)},
  MRNUMBER = {2666907},
MRREVIEWER = {H. R. Gluck},
       DOI = {10.1353/ajm.0.0115},
       URL = {https://doi.org/10.1353/ajm.0.0115},
}

@article {WY09,
    AUTHOR = {Warren, Micah and Yuan, Yu},
     TITLE = {Hessian estimates for the sigma-2 equation in dimension 3},
   JOURNAL = {Comm. Pure Appl. Math.},
  FJOURNAL = {Communications on Pure and Applied Mathematics},
    VOLUME = {62},
      YEAR = {2009},
    NUMBER = {3},
     PAGES = {305--321},
      ISSN = {0010-3640},
   MRCLASS = {35J60 (49Q05 53C38)},
  MRNUMBER = {2487850},
MRREVIEWER = {Fabiana Leoni},
       DOI = {10.1002/cpa.20251},
       URL = {https://doi.org/10.1002/cpa.20251},
}

@article {WaY,
    AUTHOR = {Wang, Dake and Yuan, Yu},
     TITLE = {Hessian estimates for special {L}agrangian equations with
              critical and supercritical phases in general dimensions},
   JOURNAL = {Amer. J. Math.},
  FJOURNAL = {American Journal of Mathematics},
    VOLUME = {136},
      YEAR = {2014},
    NUMBER = {2},
     PAGES = {481--499},
      ISSN = {0002-9327},
   MRCLASS = {35J60 (35B45 53D12)},
  MRNUMBER = {3188067},
MRREVIEWER = {Fabiana Leoni},
       DOI = {10.1353/ajm.2014.0009},
       URL = {https://doi.org/10.1353/ajm.2014.0009},
}

@article {CWY,
    AUTHOR = {Chen, Jingyi and Warren, Micah and Yuan, Yu},
     TITLE = {A priori estimate for convex solutions to special {L}agrangian
              equations and its application},
   JOURNAL = {Comm. Pure Appl. Math.},
  FJOURNAL = {Communications on Pure and Applied Mathematics},
    VOLUME = {62},
      YEAR = {2009},
    NUMBER = {4},
     PAGES = {583--595},
      ISSN = {0010-3640},
   MRCLASS = {35J60 (35B45 53C38)},
  MRNUMBER = {2492708},
MRREVIEWER = {Fabiana Leoni},
       DOI = {10.1002/cpa.20261},
       URL = {https://doi.org/10.1002/cpa.20261},
}

@article {WaY2,
    AUTHOR = {Wang, Dake and Yuan, Yu},
     TITLE = {Singular solutions to special {L}agrangian equations with
              subcritical phases and minimal surface systems},
   JOURNAL = {Amer. J. Math.},
  FJOURNAL = {American Journal of Mathematics},
    VOLUME = {135},
      YEAR = {2013},
    NUMBER = {5},
     PAGES = {1157--1177},
      ISSN = {0002-9327},
   MRCLASS = {35J60 (35B45 35R10 53D12 58C40)},
  MRNUMBER = {3117304},
MRREVIEWER = {Fabiana Leoni},
       DOI = {10.1353/ajm.2013.0043},
       URL = {https://doi.org/10.1353/ajm.2013.0043},
}

@article {NV,
    AUTHOR = {Nadirashvili, Nikolai and Vl\u{a}du\c{t}, Serge},
     TITLE = {Singular solution to special {L}agrangian equations},
   JOURNAL = {Ann. Inst. H. Poincar\'{e} C Anal. Non Lin\'{e}aire},
  FJOURNAL = {Annales de l'Institut Henri Poincar\'{e} C. Analyse Non Lin\'{e}aire},
    VOLUME = {27},
      YEAR = {2010},
    NUMBER = {5},
     PAGES = {1179--1188},
      ISSN = {0294-1449},
   MRCLASS = {35J60 (35A20 35B65)},
  MRNUMBER = {2683755},
MRREVIEWER = {Sevdzhan Ahmedov Hakkaev},
       DOI = {10.1016/j.anihpc.2010.05.001},
       URL = {https://doi.org/10.1016/j.anihpc.2010.05.001},
}

@article {Bha22,
    AUTHOR = {Bhattacharya, Arunima},
     TITLE = {A note on the two-dimensional {L}agrangian mean curvature
              equation},
   JOURNAL = {Pacific J. Math.},
  FJOURNAL = {Pacific Journal of Mathematics},
    VOLUME = {318},
      YEAR = {2022},
    NUMBER = {1},
     PAGES = {43--50},
      ISSN = {0030-8730},
   MRCLASS = {35B45 (35J60)},
  MRNUMBER = {4460226},
       DOI = {10.2140/pjm.2022.318.43},
       URL = {https://doi.org/10.2140/pjm.2022.318.43},
}

@article {BSM,
    AUTHOR = {Bhattacharya, Arunima and Mooney, Connor and Shankar, Ravi},
     TITLE = {Gradient estimates for the Lagrangian mean curvature equation with critical and supercritical phase},
   JOURNAL = {American Journal of Mathematics},
  FJOURNAL = {American Journal of Mathematics},
     PAGES = {to appear},
}

@article {LePa,
    AUTHOR = {Lewicka, Marta and Pakzad, Mohammad Reza},
     TITLE = {Convex integration for the {M}onge-{A}mp\`ere equation in two
              dimensions},
   JOURNAL = {Anal. PDE},
  FJOURNAL = {Analysis \& PDE},
    VOLUME = {10},
      YEAR = {2017},
    NUMBER = {3},
     PAGES = {695--727},
      ISSN = {2157-5045},
   MRCLASS = {35J96 (74B20)},
  MRNUMBER = {3641884},
MRREVIEWER = {Xiaobing Henry Feng},
       DOI = {10.2140/apde.2017.10.695},
       URL = {https://doi.org/10.2140/apde.2017.10.695},
}

@book {Gro,
    AUTHOR = {Gromov, Mikhael},
     TITLE = {Partial differential relations},
    SERIES = {Ergebnisse der Mathematik und ihrer Grenzgebiete (3) [Results
              in Mathematics and Related Areas (3)]},
    VOLUME = {9},
 PUBLISHER = {Springer-Verlag, Berlin},
      YEAR = {1986},
     PAGES = {x+363},
      ISBN = {3-540-12177-3},
   MRCLASS = {58G99 (35A99 35B99 53C42 58-02)},
  MRNUMBER = {864505},
MRREVIEWER = {Hung-Hsi Wu},
       DOI = {10.1007/978-3-662-02267-2},
       URL = {https://doi.org/10.1007/978-3-662-02267-2},
}

@article {Nash54,
    AUTHOR = {Nash, John},
     TITLE = {{$C^1$} isometric imbeddings},
   JOURNAL = {Ann. of Math. (2)},
  FJOURNAL = {Annals of Mathematics. Second Series},
    VOLUME = {60},
      YEAR = {1954},
     PAGES = {383--396},
      ISSN = {0003-486X},
   MRCLASS = {53.0X},
  MRNUMBER = {65993},
MRREVIEWER = {S. Chern},
       DOI = {10.2307/1969840},
       URL = {https://doi.org/10.2307/1969840},
}

@article {Kui55,
    AUTHOR = {Kuiper, Nicolaas H.},
     TITLE = {On {$C^1$}-isometric imbeddings. {I}, {II}},
      NOTE = {Nederl. Akad. Wetensch. Proc. Ser. A {{\bf{5}}8}},
   JOURNAL = {Indag. Math.},
  FJOURNAL = {},
    VOLUME = {17},
      YEAR = {1955},
     PAGES = {545--556, 683--689},
   MRCLASS = {53.1X},
  MRNUMBER = {75640},
MRREVIEWER = {J. Schwartz},
}

@article {DIS,
    AUTHOR = {De Lellis, Camillo and Inauen, Dominik and Sz\'{e}kelyhidi, Jr.,
              L\'{a}szl\'{o}},
     TITLE = {A {N}ash-{K}uiper theorem for {$C^{1,1/5-\delta}$} immersions
              of surfaces in 3 dimensions},
   JOURNAL = {Rev. Mat. Iberoam.},
  FJOURNAL = {Revista Matem\'{a}tica Iberoamericana},
    VOLUME = {34},
      YEAR = {2018},
    NUMBER = {3},
     PAGES = {1119--1152},
      ISSN = {0213-2230},
   MRCLASS = {58D10 (53C21 57N35)},
  MRNUMBER = {3850282},
MRREVIEWER = {Yun Tao Zhang},
       DOI = {10.4171/RMI/1019},
       URL = {https://doi.org/10.4171/RMI/1019},
}

@incollection {CDS12,
    AUTHOR = {Conti, Sergio and De Lellis, Camillo and Sz\'{e}kelyhidi, Jr.,
              L\'{a}szl\'{o}},
     TITLE = {{$h$}-principle and rigidity for {$C^{1,\alpha}$} isometric
              embeddings},
 BOOKTITLE = {Nonlinear partial differential equations},
    SERIES = {Abel Symp.},
    VOLUME = {7},
     PAGES = {83--116},
 PUBLISHER = {Springer, Heidelberg},
      YEAR = {2012},
   MRCLASS = {53C24 (58A07)},
  MRNUMBER = {3289360},
MRREVIEWER = {Toru Yoshiyasu},
       DOI = {10.1007/978-3-642-25361-4\_5},
       URL = {https://doi.org/10.1007/978-3-642-25361-4_5},
}

@article {CS19-a,
    AUTHOR = {Cao, Wentao and Sz\'{e}kelyhidi, Jr., L\'{a}szl\'{o}},
     TITLE = {Global {N}ash-{K}uiper theorem for compact manifolds},
   JOURNAL = {J. Differential Geom.},
  FJOURNAL = {Journal of Differential Geometry},
    VOLUME = {122},
      YEAR = {2022},
    NUMBER = {1},
     PAGES = {35--68},
      ISSN = {0022-040X},
   MRCLASS = {58A07 (53C21)},
  MRNUMBER = {4507470},
MRREVIEWER = {Mahuya Datta},
       DOI = {10.4310/jdg/1668186787},
       URL = {https://doi.org/10.4310/jdg/1668186787},
}

@article {Wa,
    AUTHOR = {Warren, Micah},
     TITLE = {Calibrations associated to {M}onge-{A}mp\`ere equations},
   JOURNAL = {Trans. Amer. Math. Soc.},
  FJOURNAL = {Transactions of the American Mathematical Society},
    VOLUME = {362},
      YEAR = {2010},
    NUMBER = {8},
     PAGES = {3947--3962},
      ISSN = {0002-9947},
   MRCLASS = {53C38 (35J96 53C50)},
  MRNUMBER = {2608392},
MRREVIEWER = {Isabel M. C. Salavessa},
       DOI = {10.1090/S0002-9947-10-05109-3},
       URL = {https://doi.org/10.1090/S0002-9947-10-05109-3},
}

@article {MS,
    AUTHOR = {Mooney, Connor and Savin, Ovidiu},
     TITLE = {Non ${C}^1$ solutions to the special Lagrangian equation},
   JOURNAL = {arXiv:2303.14282},
   YEAR = {2023},
}

@article {CP2021,
    AUTHOR = {Cirant, Marco and Payne, Kevin R.},
     TITLE = {Comparison principles for viscosity solutions of elliptic
              branches of fully nonlinear equations independent of the
              gradient},
   JOURNAL = {Math. Eng.},
  FJOURNAL = {Mathematics in Engineering},
    VOLUME = {3},
      YEAR = {2021},
    NUMBER = {4},
     PAGES = {Paper No. 030, 45},
   MRCLASS = {35J60 (35B51 35D40)},
  MRNUMBER = {4147574},
       DOI = {10.3934/mine.2021030},
       URL = {https://doi.org/10.3934/mine.2021030},
}

@article {CPW2017,
    AUTHOR = {Collins, Tristan C. and Picard, Sebastien and Wu, Xuan},
     TITLE = {Concavity of the {L}agrangian phase operator and applications},
   JOURNAL = {Calc. Var. Partial Differential Equations},
  FJOURNAL = {Calculus of Variations and Partial Differential Equations},
    VOLUME = {56},
      YEAR = {2017},
    NUMBER = {4},
     PAGES = {Paper No. 89, 22},
      ISSN = {0944-2669},
   MRCLASS = {35J60 (32W50 53C38 53D12)},
  MRNUMBER = {3659638},
MRREVIEWER = {Qi Han},
       DOI = {10.1007/s00526-017-1191-z},
       URL = {https://doi.org/10.1007/s00526-017-1191-z},
}

@article {HL3,
    AUTHOR = {Harvey, F. Reese and Lawson, Jr., H. Blaine},
     TITLE = {Pseudoconvexity for the special {L}agrangian potential
              equation},
   JOURNAL = {Calc. Var. Partial Differential Equations},
  FJOURNAL = {Calculus of Variations and Partial Differential Equations},
    VOLUME = {60},
      YEAR = {2021},
    NUMBER = {1},
     PAGES = {Paper No. 6, 37},
      ISSN = {0944-2669},
   MRCLASS = {35G30 (53C38)},
  MRNUMBER = {4179860},
       DOI = {10.1007/s00526-020-01850-1},
       URL = {https://doi.org/10.1007/s00526-020-01850-1},
}

@article {HL2,
    AUTHOR = {Harvey, F. Reese and Lawson, Jr., H. Blaine},
     TITLE = {Dirichlet duality and the nonlinear {D}irichlet problem},
   JOURNAL = {Comm. Pure Appl. Math.},
  FJOURNAL = {Communications on Pure and Applied Mathematics},
    VOLUME = {62},
      YEAR = {2009},
    NUMBER = {3},
     PAGES = {396--443},
      ISSN = {0010-3640},
   MRCLASS = {35J60 (35J70 35J96)},
  MRNUMBER = {2487853},
MRREVIEWER = {Alessandra Cutr\`\i },
       DOI = {10.1002/cpa.20265},
       URL = {https://doi.org/10.1002/cpa.20265},
}

@article {Lu2023,
    AUTHOR = {Lu, Siyuan},
     TITLE = {On the {D}irichlet problem for {L}agrangian phase equation
              with critical and supercritical phase},
   JOURNAL = {Discrete Contin. Dyn. Syst.},
  FJOURNAL = {Discrete and Continuous Dynamical Systems},
    VOLUME = {43},
      YEAR = {2023},
    NUMBER = {7},
     PAGES = {2561--2575},
      ISSN = {1078-0947},
   MRCLASS = {35J60 (35J25)},
  MRNUMBER = {4578557},
MRREVIEWER = {Antonio Vitolo},
       DOI = {10.3934/dcds.2023020},
       URL = {https://doi.org/10.3934/dcds.2023020},
}
\bibliographystyle{plain}
\end{document}